\newtheorem{proposition}{Proposition}
\newtheorem{corollary}{Corollary}
\newtheorem{lemma}{Lemma}
\newcommand{\N}{\ensuremath{\mathbb{N}}}
\newcommand{\Z}{\ensuremath{\mathbb{Z}}}
\newcommand{\I}{\ensuremath{\mathbb{I}}}
\newcommand{\R}{\ensuremath{\mathbb{R}}}
\newcommand{\C}{\ensuremath{\mathbb{C}}}
\newcommand{\cc}{\mathfrak{c}}
\newcommand{\E}{\ensuremath{\mathbb{E}}}
\renewcommand{\P}{\ensuremath{\mathbb{P}}}
\newcommand{\ind}[1]	{\ensuremath{\mathbbm{1}_{\left\{ {#1} \right\}}}}
\date{}
\keywords{Sojourn time; Processor Sharing, etc.}
\begin{document}

\colorlet{blue}{black}

\title[Departure from the  $M/M/\infty$ system]{On the number of departures from the $M/M/\infty$ queue in a finite time interval}

\author{Fabrice Guillemin}
\address{Orange Labs, CNC/NARA, 2 Avenue Pierre Marzin, 22300 Lannion, France}

\keywords{$M/M/\infty$ system, transient characteristics, selfadjoint operators, continued fractions, Laplace transforms}

\begin{abstract}
In this paper, we analyze the number of departures from an initially empty  $M/M/\infty$ system in a finite time interval. We observe the system during an exponentially distributed period of time starting from the time origin. We then consider the absorbed Markov chain describing the number of arrivals and departures in the system until the observer leaves the system, triggering the absorption  of the Markov chain. The generator of the absorbed Markov chain induces a selfadjoint operator in some Hilbert space. The use of spectral theory then allows us to compute the Laplace transform of several transient characteristics of the $M/M/\infty$ system (namely, the number of transitions of the Markov chain until absorption, the number of departures from the system, etc.). \textcolor{blue}{The analysis is extended to the finite capacity $MM/\cc/\cc$ system for some finite integer $\cc$}.
\end{abstract}

\maketitle

\section{Introduction}

The $M/M/\infty$ model is a fundamental queuing system, which has  applications in many different domains such as computer science (see for instance \cite{MorrisonMMinf} for the performance of hashing with lazy deletion), telecommunications networks (notably for modeling open loop statistical multiplexing of bulk data transfers \cite{FGAS}), etc. A key characteristic of the $M/M/\infty$ model is that the time evolution of the number of customers in the system can be described by a birth and death process, which can be analyzed by means of spectral theory \cite{reed}. Specifically, the generator of this Markov process is a tridiagonal matrix inducing a selfadjoint operator in an ad-hoc Hilbert space. 

The analysis of the $M/M/\infty$ via spectral theory  dates back to the 1950s in the seminal papers by Karlin and McGregor, see \cite{KMcG} for the $M/M/\infty$ system and some related models as well as \cite{KMcGBDP} for the analysis of birth and death processes, notably giving an expression for transition probabilities by means of the associated orthogonal polynomial system and the spectral measure. The connection between birth and death processes and continued fractions has been investigated in many papers. Let us just mention that in \cite{flajoletbdp}, special attention is paid to the connection between birth and death processes and lattice path combinatorics; this connection in turn yields  results on transient characteristics of birth and death processes (and the $M/M/\infty$ system in particular) by exploiting and generalizing results obtained by Flajolet in \cite{flajoletlp}.

The $M/M/\infty$ system is described in standard textbooks on queuing theory  \cite{grossharris,Klein0,Ross97}. \textcolor{blue}{Considering an $M/M/\infty$ system with arrival rate $\rho$ and mean service time equal to unity,}  it  is   known that the probability mass function of the number of customers in the system in the stationary regime is Poisson with mean $\rho$. Specifically, if $N$ denotes the number of customers in the system in the stationary regime, then
$$
\P(N=n) = \frac{\rho^n}{n!} e^{-\rho}.
$$
In addition, if the system is initially empty, the number $N(t)$ of customers in the system at time $t$ is \cite{grossharris}
\begin{equation}
    \label{distibN}
    \P(N(t)=m~|~ N(0)=0) = e^{-\rho(1- e^{-t})} \frac{(\rho (1-e^{-t}))^m}{m!},
\end{equation}
which is a Poisson law with mean $\rho(1- e^{-t})$. This result was established in \cite{KMcG} by solving the Markov forward equations by means of spectral theory. The spectral measure is Poisson with mean $\rho $ and the generator of the Markov process describing the number of customers in the system over time has eigenvectors, which can be expressed by means of Charlier polynomials \cite{erdelyi} forming the polynomial orthogonal system associated with the $M/M/\infty$ model.

The duration of an excursion as well as the area of an excursion above a given threshold of the occupation process of an $M/M\infty$ system have been studied in \cite{guillemin_pinchon_1998,guillemin_pinchon_1999}. The analysis reveals the key role played by associated Charlier polynomials, which  satisfy the same recurrence relation as Charlier polynomials but starting from a certain index (namely the excursion threshold).  The relationships between orthogonal polynomial systems and their associated orthogonality measure, spectral theory, continued fractions, etc. are clearly explained in \cite{ismail,chihara}. In this paper, we shall use these reference books for studying the number of departure from the $M/M/\infty$ system in a finite time interval.

While many transient characteristics of an $M/M/\infty$ system are perfectly known in the queuing literature, the number of departures from the system in a given time interval \textcolor{blue}{is more rarely considered in  the technical literature. In this paper, we consider this random variable for an initially empty system. This latter assumption is motivated by the fact that if there are $N_0(0)= n_0$ customers in the system at time $t=0$, then the number of departures $D_0(t)$ from the system at time $t$  among these customers is simply a Bernoulli random variable with mean $n_0(1-e^{-t})$, that is, for $0\leq k \leq n_0$
$$
\P(D_0(t) = k) = \binom{n_0}{k} (1-e^{-t})^k e^{-(n_0-k)t }.
$$
The most challenging issue is to compute the number of departures $D(t)$ when starting from an empty system as it involves the transient behavior of the system. (It is also worth noting  that the random variables $D_0(t)$ and $D(t)$ are independent.)}

\textcolor{blue}{Generally  speaking, when we consider an initially empty $M/G/\infty$ queue with distribution $G(t)$ for the service time $S$ (i.e., $\P(S\leq t) = G(t)$) and with arrival rate $\rho$, the number $\mathcal{D}(t)$ of departures from the queue at time $t$ is given by 
\begin{equation}
    \label{mathcalD}
  \P(\mathcal{D}(t)=k) = \sum_{n=k}^\infty \binom{n}{k} p(t)^k(1-p(t))^{n-k} \frac{(\rho t)^n}{n!}e^{-\rho t} = \frac{(\rho t p(t))^k}{k!} e^{-\rho t p(t)},
\end{equation}
where 
\begin{equation}
    \label{defpt}
    p(t) = \frac{1}{t}\int_0^t G(t-u)du,
\end{equation}
showing that the random variable $\mathcal{D}(t)$ is Poisson with mean $\rho t p(t)$. As a matter of fact, since the arrival process is Poisson with intensity $\rho$,  the number of arrivals $A(t)$ in the time interval $t$ is such that
$$
\P(A(t) = n) = \frac{(\rho t)^n}{n!}e^{-\rho t}.
$$
In addition, because the arrival process is Poisson, each  customer arrives at a uniformly distributed random time between 0 and $t$. Assuming that a customer arrives at time $u$, this customer leaves the system before time $t$ if its service time is less than $t-u$, with probability $G(t-u)$. Hence, the probability that an arbitrary arriving customer leaves the system before $t$ is given by $p(t)$. Since there are no interactions between customers and conditioning on the number of arriving customers $A(t)=n$, the number of customers which arrive in $(0,t)$ and leave the system before $t$ is Bernoulli with mean 
$np(t)$. By deconditioning on $A(t)$, Equation~\eqref{mathcalD} easily follows. In the specific case of the $M/M/\infty$ queue with unit mean service time, we have
$$
p(t) = \frac{1}{t}(e^{-t}-1+t)
$$
and then the probability mass function of the number $D(t)$ of departures in  $(0,t)$ is given by
\begin{equation}
    \label{distD}
  \P({D}(t)=k) 
  = \frac{\rho^k (t-1+e^{-t})^k}{k!} e^{\rho(1-t-e^{-t})};
\end{equation}
the random variable $D(t)$ is hence Poisson with mean $\rho (t-1+e^{-t})$.
}

\textcolor{blue}{In this paper, we show how this result can be recovered by using the spectral properties of the system. For this purpose, we adopt the same approach as in \cite{guillemin_quintuna20}.} We introduce an observer, which observes the system for an exponentially distributed duration of time with mean $1/\sigma$ and we study the Markov chain describing the number of customers in the $M/M/\infty$ system until the observer leaves the system.  This leads us to study a discrete-time Markov chain, which describes the number of arrivals and departures in the system and which is absorbed when the observer leaves. This allows us to derive the Laplace transforms of several transient characteristics, in particular that of $\P(D(t)=m)$ for some integer $m$.

This paper is organized as follows: In Section~\ref{model}, we describe the model and introduce the absorbed Markov chain of interest. In Section~\ref{spectral}, we study the spectral properties of the transition matrix of the absorbed Markov chain. The Laplace transforms of the probability mass functions of the transient characteristics   are derived in Section~\ref{transient}. \textcolor{blue}{We apply the same analysis framework to the finite capacity $M/M/\cc/\cc$ system in Section~\ref{mmcc}.} 
Some concluding remarks are presented in Section~\ref{conclusion}. 

\section{Model description and preliminary results}
\label{model}

\subsection{Notation}
We consider an $M/M/\infty$ queue with arrival rate $\rho$ and unit service rate; the system is empty at time $t=0$. We denote by $N(t)$ the number of customers in the queue at time $t$. We further introduce an observer, which  observes the $M/M/\infty$ queue  during an exponentially distributed  period of time with mean $1/\sigma$  for some $\sigma >0$. 

We consider the system composed of the $M/M/\infty$ queue and the observer and we introduce the discrete-time process $(\mathfrak{n}_k(\sigma))$ describing the number of customers in the $M/M/\infty$ queue (the observer is not included); $\mathfrak{n}_k(\sigma)$ is the number of customers in the queue at the $k$th event corresponding either to a customer arrival, or a service completion or the departure of the observer from the system. When the observer leaves the system, the process $(\mathfrak{n}_k(\sigma))$ is absorbed in some state, denoted by $-1$. The index $k$ is thus the number of departures  or arrivals  before the observer leaves the system or equivalently before the process $(\mathfrak{n}_k(\sigma))$ gets absorbed. Because the $M/M/\infty$ queue is supposed to be initially empty, we have $\mathfrak{n}_0(\sigma)=0$.

The state space of the  discrete-time Markov chain $(\mathfrak{n}_k)$ is $\{-1,0, 1,2, \ldots\}$  with transition matrix $\mathcal{A}(\sigma)$ given by
$$
\mathcal{A}(\sigma)= \begin{pmatrix}
1 & 0 & 0 & 0 & 0 & \ldots \\
\frac{\sigma}{\sigma+\rho} & 0 & \frac{\rho}{\sigma+\rho} & 0 & 0 & \ldots \\
\frac{\sigma}{1+\rho+\sigma} &\frac{1}{1+\rho+\sigma} & 0 &\frac{\rho}{1+\rho+\sigma}& 0 & \ldots \\
\frac{\sigma}{2+\rho+\sigma} & 0 &\frac{2}{2+\rho+\sigma}  & 0 &\frac{\rho}{2+\rho+\sigma}  & \ldots \\
\vdots & \vdots &  & &  & \ldots \\
\end{pmatrix}.
$$
The non-zero coefficients of the matrix $\mathcal{A}(\sigma)$ are given by $\mathcal{A}_{-1,-1}(\sigma)=1$ (the state $-1$ being absorbing) and for $n \geq 1$
\begin{equation*}
\mathcal{A}_{n,-1}(\sigma)= \frac{\sigma}{n+\sigma+ \rho},\,  \mathcal{A}_{n,n-1}(\sigma)= \frac{n}{n+\sigma+\rho}, \mbox{ and }\mathcal{A}_{n,n+1}(\sigma) = \frac{\rho}{n+\sigma+\rho}
\end{equation*}

The sub-matrix $A(\sigma)$ of $\mathcal{A}(\sigma)$ obtained by deleting the first row and the first column of matrix $\mathcal{A}(\sigma)$ is a tridiagonal matrix with coefficients $a_{n,m}$ for $n,m \geq 0$. (We keep the indices ranging from 0 to infinity instead from 1; this is motivated by  recurrence relations appearing in the following.). The only ones which are non-zero are given for  $n \geq 0$ by
$$
a_{n,n+1}(\sigma)= \frac{\rho}{n+\sigma+\rho} \mbox{ and } a_{n,n-1}(\sigma)= \frac{n}{n+\sigma+\rho},
$$
with the convention $a_{-1,0}(\sigma)=0$. This matrix is sub-stochastic, and gives the transition probabilities of the Markov chain $(\mathfrak{n}_k)$ before absorption.

\subsection{Preliminary results}

Let $e_n$ be the column vector with all entries equal to 0 except the $n$th one equal to 1. Then, for $m \geq 0$, we have 
$$
\P(\mathfrak{n}_k(\sigma)= m) = {}^te_0A(\sigma)^{k} e_m,
$$
where ${}^te_n$ is the row vector equal to the transpose of the column vector $e_n$. The probability that the observer  leaves the system at stage $k\geq 1$ while there are $m$ customers, is equal to
$$
\frac{\sigma}{m+\sigma+\rho} {}^te_0A(\sigma)^{k-1} e_m.
$$

Let $\nu(\sigma)$ denote the number of customers in the $M/M/\infty$ queue when the observer leaves the system and $\kappa(\sigma)$ be the time  at which the observer leaves the system. We have  for $k \geq 1$ and $m \geq 0$
\begin{equation}
    \label{kappamu}
\P(\kappa(\sigma) = k, \nu(\sigma) = m) =\frac{\sigma}{m+\sigma+\rho} {}^te_0 A(\sigma)^{k-1} e_m .
\end{equation}

The marginal distributions are given by for $m \geq 0$
\begin{multline}
\label{eqnu}
\P(\nu(\sigma)=m) =  \frac{\sigma}{m+\sigma +\rho}\sum_{k=1}^\infty  {}^te_0A(\sigma)^{k-1} e_m \\ =  \frac{\sigma}{m+\sigma +\rho} {}^te_0(\I-A(\sigma))^{-1} e_m ,
\end{multline}
where $\I$ is the identity matrix with zero coefficients except the diagonal ones equal to 1, and
 for $k \geq 1$
$$
\P(\kappa(\sigma) = k) = \sum_{m=0}^\infty  \frac{\sigma}{m+\sigma +\rho} {}^te_0A(\sigma)^{k-1} e_m .
$$

Let $\mathfrak{a}(\sigma)$ and $\mathfrak{d}(\sigma)$ respectively  denote the number of arrivals and departures in the $M/M/\infty$ queue, while  the  observer is in the system.  We have the following conservation equations:
\begin{equation}
    \label{conservation}
    \mathfrak{a}(\sigma) + \mathfrak{d}(\sigma) = \kappa(\sigma)-1 \mbox{  and  } \mathfrak{a} (\sigma)- \mathfrak{d}  (\sigma) = \nu(\sigma), 
\end{equation}
so that 
\begin{equation}
    \label{conservationbis}
   \mathfrak{a}(\sigma) = \frac{\kappa(\sigma)+\nu(\sigma)-1}{2} \mbox{  and   }\mathfrak{d} (\sigma) = \frac{\kappa(\sigma) - \nu(\sigma) -1}{2}. 
\end{equation}

The variable $\mathfrak{a}(\sigma)$ describes the number of arrivals at the $M/M/\infty$ queue during an exponential duration with mean $1/\sigma$. It is clear that
\begin{equation}
\label{dista}
\P(\mathfrak{a}(\sigma)=m) = \int_0^\infty \frac{(\rho t)^m}{m!} \sigma e^{-(\rho+ \sigma) t} dt =\frac{\sigma \rho^m}{(\rho+\sigma)^{m+1}},
\end{equation}
since the number of arrivals $A(t)$ in a time interval of length $t$ has a Poisson probability mass function with mean $\rho t$, i.e.,
$$
\P(A(t) = m ) =  \frac{(\rho t)^m}{m!} e^{-\rho t}.
$$
Note that 
\begin{equation}
    \label{meana}
    \E(\mathfrak{a}(\sigma))= \frac{\rho}{\sigma}.
\end{equation}

The random variable $\nu(\sigma)$ is the number of customers  in the queue when the observer leaves the system and we have
\begin{equation}
    \label{nuNt}
   \P(\nu(\sigma)=m) = \int_0^\infty \P(N(t) = m~|~N(0)=0)  \sigma e^{- \sigma t} dt, 
\end{equation}
where $N(t)$ is the number of customers in the queue at time $t$.

In the following, we shall give a representation of $\P(\nu(\sigma)=m)$ by means of Charlier polynomials \cite{erdelyi}. It is worth noting that since $\E(N(t)) = \rho (1-e^{-t})$ (from Equation~\eqref{distibN}), we have
\begin{equation}
    \label{meannu}
    \E(\nu(\sigma)) = \frac{\rho}{\sigma+1}.
\end{equation}

While the random variables $\nu(\sigma)$ and $\mathfrak{a}(\sigma)$ as well as $A(t)$ and $N(t)$ are known, their correlation structure (namely, their joint probability mass  functions) is less investigated in the literature. For the random variables $\mathfrak{d}(\sigma)$ and $D(t)$, we shall use the fact that 
\begin{equation}
    \label{defDt}
   \P(\mathfrak{d}(\sigma)=m)   = \int_0^\infty   \P(D(t) =m~|~N_0=0)  \sigma e^{-\sigma t} dt.
\end{equation}
It is also worth noting that by using Equations~\eqref{conservation}, \eqref{meana} and \eqref{meannu}
\begin{equation}
\label{meand}
\E(\mathfrak{d}(\sigma))= \E(\mathfrak{a}(\sigma)) - \E(\nu(\sigma)) = \frac{\rho}{\sigma(\sigma+1)}
\end{equation}
and then
\begin{equation}
    \label{meankappa}
    \E(\kappa(\sigma))=  \frac{\rho+(\sigma+1)(\sigma+\rho)}{\sigma(\sigma+1)}.
\end{equation}

To compute the probability mass  function of the random variable $\mathfrak{d}(\sigma)$, we use the orthogonality structure associated with the $M/M/\infty$ queue, already known to Karlin and McGregor \cite{KMcG}. In particular, the resolvent  $(z\I-A(\sigma))^{-1}$ of the infinite matrix $A(\sigma)$  as well as the powers of matrix $A(\sigma)$ play a central role in the computations of the probability mass functions of random variables $\kappa(\sigma)$ and $\mathfrak{d}(\sigma)$. 

\section{Spectral properties of matrix $A$}
\label{spectral}

To compute the resolvent $(z\I-A(\sigma))^{-1}$, we prove that the infinite matrix $A(\sigma)$ induces an operator, which is self-adjoint in an appropriate Hilbert space. We then determine the spectrum of this operator and use the spectral identity \cite{reed}. To determine the spectrum of the operator $A(\sigma)$, we use for $a>0$  the Charlier polynomials  $(C_n(x;a))$ satisfying the following recursion: $C_{-1}(x;a)=0$,  $C_{0}(x;a)=1$ and for $n\geq 0$
\begin{equation}
\label{recPC}
a C_{n+1}(x;a)+(x-n-a)C_n(x;a)+nC_{n-1}(x;a)=0.
\end{equation}
It is worth noting that they satisfy the following symmetry relation: for integers $n$ and $x$
\begin{equation}
    \label{symCharlier}
    C_n(x;a)=C_x(n;a).
\end{equation}
\textcolor{blue}{Note that for instance $C_1(x;a) = \frac{a-x}{a}$ and $C_2(x;a) = \frac{a^2-(2a+1)x+x^2}{a^2}$.}

{\color{blue} 
We define the  Charlier polynomials of the second kind  $C^*_n(x;a)$, $a>0,$ by the same  recursion~\eqref{recPC} but with the initial conditions: $C^*_0(x;a)=0$ and $C^*_1(x;a) = -\frac{1}{a}$. Note that $C^*_2(x;a) = \frac{x-a-1}{a^2}$. It is known in the technical literature \cite{ismail} that 
\begin{equation}
    \label{PCcf}
  \lim_{n\to \infty} \frac{C^*_n(x;a)}{C_n(x;a)}= \frac{1}{x} \Phi(1,-x+1;-a),
\end{equation}
where $\Phi(\alpha,\gamma;z)$ is the Kummer function \cite{Abramowitz} defined by
$$
\Phi(\alpha,\beta;z) = \sum_{n=0}^\infty \frac{(\alpha)_n}{(\beta)_n}\frac{z^n}{n!},
$$
where the Pochammer symbol $(z)_n= z \ldots (z+n-1)$, and with integral representation
$$
\Phi(\alpha,\beta;z) = \frac{\Gamma(\beta)}{\Gamma(\beta-\alpha)\Gamma(\alpha)}\int_0^1 t^{\alpha-1}(1-t)^{\beta-\alpha-1} e^{z t}dt
$$
for $\Re(\beta)>\Re(\alpha)>0$, $\Gamma(z)$ denoting the Eurler's Gamma function.

The polynomials $a^nC_n(-z;a)$ and $-a^nC^*_n(-z;a)$ are the successive denominators and numerators of the continued fraction \cite{ismail}
$$
\chi_0(z;a) =  \cfrac{1}{z+a -\cfrac{a}{z+a+1 -\cfrac{2 a}{z+a+2 -\cdots}}}= \frac{1}{z}\Phi(1,1+z;-a).
$$
}

The Charlier polynomials are orthogonal with respect to the Poisson measure $d\mathcal{P}_a(x)$ on $\mathbbm{\R}$ with mean $a$, that is, the discrete  measure with atoms at points $n = 0, 1, 2, \ldots$ and with mass
$$
\frac{a^n}{n!}e^{-a}
$$
at point $n$.  {\color{blue}By using  \cite[Theorem~12.11b]{Henrici} on Stieltjes fractions, we have the relation 
\begin{equation}
    \label{henrici_charlier}
 \int_{0}^\infty \frac{C_m(x;a) }{z+x}d \mathcal{P}_a (x) = C_m(-z;a) \chi_0(z;a) + C^*_m(-z;a).
 \end{equation}
}
Finally, the exponential generating function of the Charlier polynomials is given by
\begin{equation}
    \label{PCgen}
\mathcal{C}(x;a;z) \stackrel{def}{=}\sum_{n=0}^\infty C_n(x;a)\frac{z^n}{n!} = e^z\left(1-\frac{z}{a}  \right)^x.    
\end{equation}

\subsection{Self-adjointness properties}

By considering the Hilbert space
$$
H(\sigma)=\left\{f\in \R^\N : \sum_{n=0}^\infty f_n^2 \pi_n(\sigma)<\infty \right\},
$$
where 
\begin{equation}
\label{defpin}
\pi_n(\sigma) = \frac{\sigma+\rho+n}{\sigma+\rho}\frac{\rho^n}{n!},
\end{equation}
we show that the matrix $A(\sigma)$ defines a selfadjoint operator when the Hilbert space $H(\sigma)$ is equipped with the scalar product
$$
(f,g) = \sum_{n=0}^\infty f_n g_n \pi_n(\sigma)
$$
and the norm
$$
\| f\| = \sqrt{(f,f)} = \sqrt{\sum_{n=0}^\infty f_n^2 \pi_n(\sigma)}.
$$
The Hilbert space $H(\sigma)$ is introduced because the parameters $\pi_n$ satisfy the reversibility  property with the coefficients $a_{n,m}$ of the matrix $A(\sigma)$
\begin{equation}
    \label{reversecond}
    a_{n,n+1}(\sigma)\pi_n(\sigma) = a_{n+1,n}(\sigma)\pi_{n+1}(\sigma),
\end{equation}
making the  matrix $A(\sigma)$ symmetric.

The infinite matrix $A(\sigma)$ induces in $H(\sigma)$ an operator that we also denote by $A(\sigma)$. By using the same arguments as in~\cite{FGBJ}, we can easily prove the following lemma, where we use the norm of the operator $A(\sigma)$ defined by
$$
\|A(\sigma)\| = \sup_{f\in H : \|f\|<1}|(A(\sigma)f,f)|;
$$
by definition, the operator $A(\sigma)$ is bounded if $\|A(\sigma)\|<\infty$.

\begin{lemma}
The operator $A(\sigma)$ is symmetric and bounded in $H(\sigma)$, hence self-adjoint.
\end{lemma}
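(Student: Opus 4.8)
The claim is that the operator $A(\sigma)$ on $H(\sigma)$ is symmetric and bounded, hence self-adjoint. I would split the argument into these two parts, since boundedness of a symmetric operator defined on the whole space immediately upgrades symmetry to self-adjointness (by the Hellinger--Toeplitz theorem, or simply because a bounded everywhere-defined symmetric operator has $A=A^*$).

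\medskip

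\emph{Symmetry.} First I would verify that $(A(\sigma)f,g)=(f,A(\sigma)g)$ for $f,g$ in a dense subspace (say finitely-supported sequences). Because $A(\sigma)$ is tridiagonal, the bilinear form $(A(\sigma)f,g)$ reduces to a sum of terms of the shape
$$
a_{n,n+1}(\sigma)\pi_n(\sigma)\, f_{n+1} g_n + a_{n+1,n}(\sigma)\pi_{n+1}(\sigma)\, f_n g_{n+1}.
$$
The reversibility identity~\eqref{reversecond} states precisely that the two coefficients appearing here coincide, so each such pair is symmetric under interchanging $f$ and $g$. Summing over $n$ then gives $(A(\sigma)f,g)=(f,A(\sigma)g)$. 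This is the routine step and follows directly from the detailed balance relation already recorded in~\eqref{reversecond}.

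\medskip

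\emph{Boundedness.} This is where the real work lies. I would estimate $\|A(\sigma)f\|$ in terms of $\|f\|$ directly from the coefficients. Writing out the components, $(A(\sigma)f)_n = a_{n,n-1}(\sigma)f_{n-1}+a_{n,n+1}(\sigma)f_{n+1}$, and using $2|xy|\le x^2+y^2$ together with the explicit forms
$$
a_{n,n+1}(\sigma)=\frac{\rho}{n+\sigma+\rho},\qquad a_{n,n-1}(\sigma)=\frac{n}{n+\sigma+\rho},
$$
I expect the off-diagonal weights, measured against the geometric-times-Poisson weight $\pi_n(\sigma)$, to remain uniformly bounded in $n$. The key point is that $a_{n,n+1}$ decays like $\rho/n$ while $\pi_{n+1}/\pi_n\sim \rho/n$, so the products $a_{n,n+1}(\sigma)\sqrt{\pi_{n+1}(\sigma)/\pi_n(\sigma)}$ and $a_{n,n-1}(\sigma)\sqrt{\pi_{n-1}(\sigma)/\pi_n(\sigma)}$ stay bounded. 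Establishing a uniform bound on these normalized weights yields $\|A(\sigma)f\|\le M\|f\|$ for a finite constant $M$, which is exactly $\|A(\sigma)\|<\infty$.

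\medskip

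The main obstacle is the boundedness estimate: one must confirm that the competing growth of $n$ in the death rate $a_{n,n-1}(\sigma)$ and the decay of $\pi_n(\sigma)$ balance so that no subsequence of the normalized off-diagonal entries escapes to infinity. Since the paper states this follows ``by using the same arguments as in~\cite{FGBJ}'', I would mirror that reference's weighting argument, checking that the substochastic structure $a_{n,n-1}+a_{n,n+1}\le 1$ combined with the reversibility weights~\eqref{reversecond} gives the required uniform control. Once boundedness is in hand, self-adjointness is immediate, completing the proof.
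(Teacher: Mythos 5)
Your proposal is correct, and the symmetry step is exactly the paper's: both arguments rest on the detailed-balance identity~\eqref{reversecond}. For boundedness, however, you take a genuinely different route. You propose to symmetrize the tridiagonal matrix by the weights $\sqrt{\pi_n(\sigma)}$ and check that the normalized off-diagonal entries $a_{n,n+1}(\sigma)\sqrt{\pi_n(\sigma)/\pi_{n+1}(\sigma)}$ stay uniformly bounded (they do: the entry behaves like $\sqrt{\rho/n}$, since $a_{n,n+1}\sim\rho/n$ while $\pi_{n+1}/\pi_n\sim\rho/n$, so the competing growth of the death rates and the decay of $\pi_n$ do balance), and then invoke the standard fact that a symmetric tridiagonal matrix with uniformly bounded entries is bounded on a weighted $\ell^2$ space. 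That works, but note two things. First, be careful with the direction of the conjugation: the symmetrized entry is $a_{n,m}\sqrt{\pi_n/\pi_m}$, not $a_{n,m}\sqrt{\pi_m/\pi_n}$; by~\eqref{reversecond} the two conventions give the same doubly-indexed family, but your index bookkeeping should be made consistent before summing. Second, the paper instead bounds the quadratic form directly: since the diagonal of $A(\sigma)$ vanishes, $(A(\sigma)f,f)=\frac{2\rho}{\sigma+\rho}\sum_n\frac{\rho^n}{n!}f_nf_{n+1}$, and a single Cauchy--Schwarz step gives the explicit bound $\|A(\sigma)\|\leq 2\sqrt{\rho/(\sigma+\rho)}$. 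This sharper, explicit constant is not cosmetic: it is used immediately afterwards to localize the spectrum in $\bigl[-2\sqrt{\rho/(\sigma+\rho)},\,2\sqrt{\rho/(\sigma+\rho)}\bigr]$, which is consistent with the atoms $s_k^{\pm}(\sigma)=\pm\sqrt{\rho/(\sigma+\rho+k)}$ found later. Your Schur-type estimate would prove boundedness but with a cruder constant, so you would lose that downstream payoff; if you pursue your route you should still supplement it with the quadratic-form computation to recover the sharp norm bound.
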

\begin{proof}
The symmetry of $A(\sigma)$ is straightforward by using Equation~\eqref{reversecond}.

For $f\in H(\sigma)$, we have
$$
 (A(\sigma)f,f) = {2} \sum_{n=0}^\infty a_{n,n+1}(\sigma) f_nf_{n+1}\pi_n(\sigma) = \frac{2 \rho}{\sigma+\rho} \sum_{n=0}^\infty \frac{\rho^n}{n!}  f_nf_{n+1}  
$$
By Schwarz inequality
$$
 \left| (A(\sigma)f,f) \right| \leq  \frac{2 \rho}{\sigma+\rho} \sqrt{\sum_{n=0}^\infty \frac{\rho^n}{n!} f^2_n}\sqrt{ \sum_{n=0}^\infty  \frac{\rho^n}{n!}   f^2_{n+1} }
 $$
We clearly have
$$
 \sum_{n=0}^\infty \frac{\rho^n}{n!} f^2_n \leq \|f\|^2.
$$
Moreover,
$$
\sum_{n=0}^\infty  \frac{\rho^n}{n!}   f^2_{n+1}= \frac{\sigma+\rho}{\rho} \sum_{n=0}^\infty  \frac{(n+1)}{\sigma+\rho+n+1}   f^2_{n+1}\pi_{n+1}(\sigma) \leq \frac{\sigma+\rho}{\rho} \|f\|^2.
$$
Hence,
$$
 \left| (A(\sigma)f,f) \right| \leq 2\sqrt{\frac{\rho}{\sigma+\rho}} \|f\|^2.
$$
This implies that $\|A(\sigma)\|\leq 2\sqrt{\frac{\rho}{\sigma+\rho}}$.
\end{proof}

\subsection{Spectrum}

The spectrum $\mathcal{S}(A(\sigma))$ of the operator $A(\sigma)$ is defined by
$$
\mathcal{S}(A(\sigma)) = \{z \in \R : (z\I-A(\sigma)) \mbox{ is not invertible}\}.
$$
Since  $\|A(\sigma)\|\leq 2\sqrt{\frac{\rho}{\sigma+\rho}}$, we know that $\mathcal{S}(A(\sigma))  \subset \left[ -2\sqrt{\frac{\rho}{\sigma+\rho}}, 2\sqrt{\frac{\rho}{\sigma+\rho}} \right]$. \textcolor{blue}{The spectrum is the support of the spectral measure of the operator $A(\sigma)$.}

\begin{proposition}
The spectral measure of the operator $A(\sigma)$ is purely discrete with atoms at points $s_k^\pm (\sigma)$ defined  for $k=0,1, 2, \ldots$ by
\begin{equation}
    \label{defsk}
    s_k^\pm  (\sigma)= \pm \sqrt{\frac{\rho}{\sigma+\rho+k}};
\end{equation}
the mass at point $s_k^\pm (\sigma)$ is
\begin{equation}
    \label{defrk}
    r_k (\sigma) = \frac{\sigma+\rho}{2(\sigma+\rho+k)} \frac{(\sigma+\rho+k)^k}{k!} e^{-(\sigma+\rho+k)}.
\end{equation}
\end{proposition}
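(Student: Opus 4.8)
The plan is to read off the spectrum and the spectral weights directly from the three-term recurrence attached to $A(\sigma)$, using the Charlier polynomials as the associated orthogonal system. Writing the eigenvalue equation $A(\sigma)v=zv$ componentwise and clearing the denominator $n+\sigma+\rho$ gives, for $n\ge 0$, the recursion $\rho v_{n+1}-z(n+\sigma+\rho)v_n+n v_{n-1}=0$, with the convention $v_{-1}=0$ forced by $a_{-1,0}(\sigma)=0$. First I would substitute $v_n=z^{-n}C_n(x;a)$ with $a=\rho/z^2$ and $x=\rho/z^2-\sigma-\rho$; a direct computation turns this recursion into exactly \eqref{recPC}, and the boundary case $n=0$ is then satisfied automatically (it imposes no extra condition). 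Thus $v_n(z)=z^{-n}C_n(x;a)$ is, for every $z\neq 0$, the solution (unique up to scale) of the recurrence meeting the left boundary condition, and $z$ is an eigenvalue precisely when this solution lies in $H(\sigma)$.

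The quantization then comes from square-summability. Since $z^{-2n}\pi_n(\sigma)=\tfrac{\sigma+\rho+n}{\sigma+\rho}\tfrac{a^n}{n!}$, we have $\|v(z)\|^2=\tfrac{1}{\sigma+\rho}\sum_{n\ge0}\tfrac{a^n}{n!}(\sigma+\rho+n)\,C_n(x;a)^2$. Invoking the symmetry \eqref{symCharlier}, $C_n(x;a)=C_x(n;a)$, I would observe that when $x=k$ is a non-negative integer this is a polynomial of degree $k$ in $n$, so the factor $a^n/n!$ forces convergence and $v\in H(\sigma)$; the condition $x=k$ reads $z^2=\rho/(\sigma+\rho+k)$, i.e. $z=s_k^\pm(\sigma)$. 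Each $s_k^\pm(\sigma)$ is therefore a (simple, as always for Jacobi operators) eigenvalue, so it carries an atom of the spectral measure $\mu$ of the cyclic unit vector $e_0$ (note $\pi_0(\sigma)=1$), of mass $r_k(\sigma)=|(e_0,\phi_k)|^2=1/\|v(s_k^\pm)\|^2$, where $\phi_k=v/\|v\|$ and $(e_0,v)=v_0\pi_0(\sigma)=1$; this mass is the same for both signs because $\|v\|$ depends only on $z^2$.

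To evaluate the mass I would set $a_k=\sigma+\rho+k$ and compute the two sums via Charlier orthogonality with respect to the Poisson measure $d\mathcal{P}_{a}$: the zeroth moment $\sum_n C_k(n;a_k)^2\tfrac{a_k^n e^{-a_k}}{n!}=\tfrac{k!}{a_k^{\,k}}$, and the first moment $\sum_n n\,C_k(n;a_k)^2\tfrac{a_k^n e^{-a_k}}{n!}=(k+a_k)\tfrac{k!}{a_k^{\,k}}$, the latter obtained by expanding $n\,C_k(n;a_k)$ through \eqref{recPC} and using orthogonality to kill the $C_{k\pm1}$ terms. Since $(\sigma+\rho)+(k+a_k)=2a_k$, these give $\|v(s_k^\pm)\|^2=\tfrac{2e^{a_k}k!}{(\sigma+\rho)a_k^{\,k-1}}$, hence $r_k(\sigma)=\tfrac{(\sigma+\rho)a_k^{\,k-1}}{2e^{a_k}k!}$, which is exactly \eqref{defrk}. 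Finally, to conclude that there is no other spectrum I would check that the atoms exhaust the total mass: $\sum_{k\ge0}2r_k(\sigma)=(\sigma+\rho)\sum_{k\ge0}\tfrac{(\sigma+\rho+k)^{k-1}}{k!}e^{-(\sigma+\rho+k)}=1$ by the Abel--Borel identity. As $\mu$ is a probability measure, this forces $\mu=\sum_{k}r_k(\sigma)(\delta_{s_k^+}+\delta_{s_k^-})$, so the spectrum is purely discrete and equal to $\{s_k^\pm(\sigma)\}$.

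The main obstacle is precisely this completeness step: exhibiting the eigenvectors only shows that the $s_k^\pm(\sigma)$ lie in the spectrum, and one must separately rule out extra atoms or a continuous part. The cleanest rigorous device is the total-mass identity above, which is a form of the Borel--Tanner normalization; alternatively, one can package the whole statement analytically through the resolvent. Its diagonal entry $(e_0,(w\I-A(\sigma))^{-1}e_0)=\int d\mu(z)/(w-z)$ is the value of the Jacobi continued fraction, which by the Henrici--Stieltjes relation \eqref{henrici_charlier} is built from the minimal solution $\tfrac1x\Phi(1,-x+1;-a)$ of \eqref{recPC}; its poles in $x$ sit at the non-negative integers and translate, through $x=\rho/z^2-\sigma-\rho$, into poles of the resolvent at $z=s_k^\pm(\sigma)$, with residues reproducing $r_k(\sigma)$. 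Either way, the delicate point is controlling the non-integer values of $x$, where the Charlier solution fails to be square-summable.
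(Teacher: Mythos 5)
Your proof is correct, but it follows a genuinely different route from the paper's. The paper never tests square-summability of the eigenvector candidates: it treats the recurrence as defining an orthogonal polynomial system, forms the associated J-fraction $\chi(\sigma;z)=\lim_n P^*_n/P_n$, evaluates it in closed form as a Kummer function via \eqref{PCcf}, and then reads off the atoms $s_k^\pm(\sigma)$ and the masses $r_k(\sigma)$ as the poles and residues of that function (Perron--Stieltjes inversion). You instead work directly at the operator level: you identify, for each $z$, the unique solution of the boundary recurrence as a rescaled Charlier polynomial, quantize by demanding membership in $H(\sigma)$ (which via the symmetry $C_n(x;a)=C_x(n;a)$ happens exactly at integer $x=k$, i.e.\ $z=s_k^\pm(\sigma)$), compute $r_k(\sigma)=1/\|v(s_k^\pm)\|^2$ from the zeroth and first moments of $C_k(\cdot;a_k)^2$ against the Poisson measure, and settle completeness by checking $\sum_k 2r_k(\sigma)=1$ with the Abel--Borel identity $\alpha\sum_k (\alpha+k)^{k-1}e^{-(\alpha+k)}/k!=1$ (the same identity the paper notes later, after Proposition~\ref{gend}). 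All your computations check out: the substitution matches \eqref{PCrel}, the moment evaluations $k!/a_k^k$ and $(k+a_k)k!/a_k^k$ are right, and $(\sigma+\rho)+(k+a_k)=2a_k$ gives exactly \eqref{defrk}. Your closing worry about non-integer $x$ is in fact moot: the total-mass argument already excludes any further spectrum, so no separate non-square-summability estimate is needed. What the paper's route buys is the explicit Stieltjes transform and the connection formula \eqref{eqconnexion}, which are reused heavily in Section~\ref{transient}; what your route buys is a more elementary, self-contained argument that also yields the orthogonality normalization \eqref{orthrel} at the special points as a by-product. The resolvent-based alternative you sketch in your last paragraph is essentially the paper's actual proof.
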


\begin{proof}
Let us consider some vector $P(\sigma; x)$, a priori not necessarily in $H(\sigma)$, such that $A(\sigma)P(\sigma;x)=x P(\sigma;x)$ for some real number $x$. By setting without loss of generality $P_{-1}(\sigma;x)=0$ and $P_0(\sigma;x)=1$, we have for $n \geq 0$
\begin{equation}
\label{recurQ}
\rho  P_{n+1}(\sigma;x) -(n+\sigma+\rho) x P_n(\sigma;x) +n P_{n-1}(\sigma;x)=0.
\end{equation}
This recurrence formula defines an orthogonal polynomial system (OPS), which has been studied by Karlin and McGregor in \cite{KMcG} for $\sigma =0$. The orthogonality is checked by using Favard condition \cite{ismail}. Indeed, the above polynomials satisfy a recurrence relation of the type
$$
P_{n+1}(\sigma;x) =(a_n x+b_n) P_n(\sigma;x) -c_n P_{n-1}(\sigma;x)
$$
with $$
a_n = \frac{n+\sigma+\rho}{\rho}, \; b_n=0, \; \mbox{and} \;  c_n=\frac{n}{\rho}.
$$
for $n\geq 0$ so that Favard condition $a_{n-1}a_n c_n>0$ for $n>1$ is satisfied.

It is easily checked  by using Equation~\eqref{recPC} that the polynomials $P_n(\sigma;x)$ are related to Charlier polynomials as follows: for $n\geq 0$,
\begin{equation}
    \label{PCrel}
  P_n(\sigma;x) = \frac{1}{x^n} C_n\left(\frac{\rho-(\sigma+\rho)x^2}{x^2};\frac{\rho}{x^2}  \right). 
\end{equation}

To determine the orthogonality  measure of the polynomials $P_n(\sigma;x)$, which also defines the spectrum and the spectral measure of the operator $A(\sigma)$, we follow the general method given in \cite{ismail,chihara}.

The polynomials $P_n(\sigma;z)$ are the successive denominators of the continued fraction
$$
\chi(\sigma;z) =  \cfrac{a_0}{a_0z+b_0 -\cfrac{c_1}{a_1 z+b_1 -\cfrac{c_2}{a_2 z +b_2 +\cdots}}}.
$$
\textcolor{blue}{This continued fraction is introduced because the domain where  this function is not defined is precisely the support of the spectral/orthogonality measure, which is then determined by using Perron-Stieltjes inversion formula (see \cite{ismail} for details).}

To obtain an explicit expression for the continued fraction $\chi(\sigma;z)$, we introduce the polynomials of the second kind $P^*_n(\sigma;z)$ associated with the polynomials $P_n(\sigma;z)$ and  satisfying the same recurrence relation~\eqref{recurQ} but with the initial conditions $P^*_0(\sigma;z) =0$ and $P^*_1(\sigma;z)= a_0 = \frac{\sigma+\rho}{\rho}$. It is easily checked that these polynomials are related to the Charlier polynomials of the second kind $C_n^*(z;a)$ as 
$$
P^*_n(\sigma;z) = -\frac{\sigma+\rho}{x^{n+1}} C^*_n\left(\frac{\rho-(\sigma+\rho)x^2}{x^2};\frac{\rho}{x^2}  \right).
$$
(For polynomials of the second kind, indices usually start from 0 and not -1.)


By using Equation~\eqref{PCcf}, we have
\begin{eqnarray*}
\chi(\sigma;x) &= &\lim_{n\to \infty} \frac{P^*_n(\sigma;x)}{P_n(\sigma;x)} = \frac{(\sigma+\rho)}{x\left(\rho+\sigma-\frac{\rho}{x^2}\right)} \Phi\left(1,\rho+\sigma-\frac{\rho}{x^2}+1;-\frac{\rho}{x^2}\right)\\
&=&
 \sum_{m=0}^\infty \frac{(\rho+\sigma)}{x\left(\rho+\sigma-\frac{\rho}{x^2}\right) \ldots\left(\rho+\sigma-\frac{\rho}{x^2}+m\right)  }\left( -\frac{\rho}{x^2}\right)^m.
\end{eqnarray*}

It is obvious that $x=0$ is a removable singularity \textcolor{blue}{and $\chi(\sigma;0)=0$}. The actual poles are the points
$$
s_k^\pm (\sigma) = \pm \sqrt{\frac{\rho}{\sigma+\rho+k}}
$$
for $k= 0, 1, 2, \ldots$. The rational fraction
$$
\frac{(\rho+\sigma)}{x\left(\rho+\sigma-\frac{\rho}{x^2}\right) \ldots\left(\rho+\sigma-\frac{\rho}{x^2}+m\right)  }
$$
has a pole at point $s_k^\pm (\sigma)$ for $k \leq m$. For $m = k+\ell$, we have
$$
\left.\prod_{j=0, j\neq k}^m \frac{1}{\rho+\sigma +j -\frac{\rho}{x^2}} \right|_{x=s_k^\pm} = \frac{(-1)^k}{k! \ell !}
$$
The residue at pole $s_k^\pm (\sigma)$ of the function
$$
\frac{1}{x\left(\rho+\sigma-\frac{\rho}{x^2}  \right)}
$$
is $\frac{1}{2(\sigma+\rho+k)}$. It follows that the residue of the function $\chi(\sigma;x)$ at pole $s_k^\pm (\sigma)$ is
$$
r_k (\sigma) = \frac{\sigma+\rho}{2(\sigma+\rho+k)} \frac{(\sigma+\rho+k)^k}{k!} e^{-(\sigma+\rho+k)}.
$$
We deduce that the orthogonality measure associated with the polynomials $P_n(\sigma;x)$, which is also the spectral measure of the operator $A(\sigma)$, is purely discrete with atoms at points $s_k^\pm (\sigma)$ for $k =0,1,2, \ldots$ and with mass $r_k (\sigma)$ at $s_k^\pm (\sigma)$. The vectors $P(\sigma;s^\pm_k(\sigma))$, $k=0,1, \ldots$ are eigenvectors of the operator $A(\sigma)$.
\end{proof}

From the above proposition, the spectral measure $d\psi(\sigma;x)$ is given by
\begin{equation}
\label{defpsi}
d\psi(\sigma;x) = \sum_{k=0}^\infty r_k(\sigma) (\delta_{s_k^+(\sigma)}(dx)+\delta_{s_k^-(\sigma)}(dx)),
\end{equation}
where $\delta_a(dx)$ is the Dirac mass at point $a$. The polynomials $P_n(\sigma;x)$ satisfy the orthogonality relations \cite{ismail}
\begin{equation}
    \label{orthrel}
    \int_{-\infty}^\infty P_n(\sigma;x)P_m(\sigma;x) d\psi(\sigma;x) = \frac{1}{\pi_n(\sigma)} \delta_{n,m},
\end{equation}
where $\pi_n(\sigma)$ is defined by Equation~\eqref{defpin}. {\color{blue} The continued fraction $\chi(\sigma;z)$ is such that 
$$
\chi(\sigma;z) = \int_{-\infty}^\infty \frac{1}{z-x} d\psi(\sigma;x)
$$
and by using the arguments in \cite{ismail} it is possible to obtain the following relation for $z$ not in the support of the measure $d\psi(\sigma;x)$:
\begin{equation}
\label{eqconnexion}
\int_{-\infty}^\infty \frac{P_m(\sigma;x) }{z-x}d \psi(\sigma;x)  = P_m(\sigma;z) \chi(\sigma;z) - P^*_m(\sigma;z).
 \end{equation}

Finally, it is worth noting that the exponential generating function of the polynomials  $P_n(\sigma;x)$ is given by
\begin{equation}
    \label{expogenPn}
    \sum_{n=0}^\infty P_n(\sigma;x) \frac{z^n}{n!} = \mathcal{C}\left(\frac{\rho-(\sigma+\rho)x^2}{x^2};\frac{\rho}{x^2};\frac{z}{x}  \right) = e^{\frac{z}{x}}\left(1 - \frac{z x}{\rho}   \right)^{\frac{\rho}{x^2}-\sigma-\rho},
\end{equation}
}
where we have used the definition of $\mathcal{C}(x;a;z)$ given  by Equation~\eqref{PCgen}.

\section{Transient characteristics of the $M/M/\infty$ queue}
\label{transient}

In this section, we use the spectral properties of the operator $A(\sigma)$ to compute the probability mass functions of the transient characteristics $\kappa(\sigma)$, $\nu(\sigma)$, and $\mathfrak{d}(\sigma)$. In a first step, we consider the random variable $\nu(\sigma)$, which is related to the number of customers in the queue at time $t$.

\begin{proposition}
Under the assumption that the system is empty at the time origin, the probability mass function of the random variable $\nu(\sigma)$ equal to the number of customers in the system upon departure of the observer is given by
\begin{equation}
    \label{massnu}
    \P(\nu(\sigma)=m) = \frac{\sigma}{\sigma+\rho} \frac{\rho^m}{m!}  \int_{-\infty}^\infty \frac{P_m(\sigma;x) }{1-x}d \psi(\sigma;x),
\end{equation}
which can be rewritten as
\begin{equation}
\label{massnubis}
\P(\nu(\sigma)=m ) = \sigma  \frac{\rho^m}{m!} \int_0^\infty \frac{C_m(x;\rho)}{\sigma+x}d\mathcal{P}_\rho(x),
\end{equation}
where $d\mathcal{P}_\rho(x)$ is the Poisson measure on $\mathbbm{R}$ with mean $\rho$.
\end{proposition}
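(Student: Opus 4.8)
The plan is to reduce the computation to the resolvent matrix element ${}^te_0(\I-A(\sigma))^{-1}e_m$ that already appears in \eqref{eqnu}, to evaluate this element through the spectral decomposition of $A(\sigma)$ established in the previous section, and finally to convert the resulting integral against $d\psi(\sigma;x)$ into one against the Poisson measure $d\mathcal{P}_\rho$ by means of the connection formula \eqref{eqconnexion} and the Charlier identity \eqref{henrici_charlier}.

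First I would prove, for every $z$ outside the spectrum, the resolvent identity
\begin{equation*}
{}^te_0(z\I-A(\sigma))^{-1}e_m = \pi_m(\sigma)\int_{-\infty}^\infty \frac{P_m(\sigma;x)}{z-x}\,d\psi(\sigma;x).
\end{equation*}
Since $\pi_0(\sigma)=1$, the ordinary entry ${}^te_0 M e_m$ coincides with the scalar product $(e_0,Me_m)$ of $H(\sigma)$. By \eqref{orthrel} together with completeness of the eigensystem (guaranteed by self-adjointness and the purely discrete spectral measure), the normalized eigenvectors $\sqrt{r_k(\sigma)}\,P(\sigma;s_k^\pm(\sigma))$ form an orthonormal basis of $H(\sigma)$, so that $(z\I-A(\sigma))^{-1}$ expands as a sum of rank-one projections onto these vectors weighted by $(z-s_k^\pm(\sigma))^{-1}$. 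Pairing with $e_0$ and $e_m$ and using $P_0\equiv 1$ gives $(e_0,\sqrt{r_k}\,P(\sigma;s_k^\pm))=\sqrt{r_k}$ and $(\sqrt{r_k}\,P(\sigma;s_k^\pm),e_m)=\sqrt{r_k}\,P_m(\sigma;s_k^\pm)\pi_m(\sigma)$, and summing over $k$ reproduces the integral against $d\psi(\sigma;x)$ with the prefactor $\pi_m(\sigma)$. This weight $\pi_m(\sigma)$ is the delicate point, and I expect the main obstacle to be exactly this bookkeeping: distinguishing the plain transpose ${}^te_0$ from the weighted inner product of $H(\sigma)$ and tracking the normalization so that $\pi_m(\sigma)$ emerges correctly. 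I would then take $z=1$, which is admissible because the largest atom $s_0^+(\sigma)=\sqrt{\rho/(\sigma+\rho)}$ is strictly less than $1$ for $\sigma>0$; substituting into \eqref{eqnu} and simplifying $\frac{\sigma}{m+\sigma+\rho}\pi_m(\sigma)=\frac{\sigma}{\sigma+\rho}\frac{\rho^m}{m!}$ via \eqref{defpin} yields \eqref{massnu}.

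To pass to \eqref{massnubis} I would apply the connection formula \eqref{eqconnexion} at $z=1$, turning the integral in \eqref{massnu} into $P_m(\sigma;1)\chi(\sigma;1)-P^*_m(\sigma;1)$. Evaluating the Charlier representations at $x=1$ collapses the arguments, since $\frac{\rho-(\sigma+\rho)x^2}{x^2}\big|_{x=1}=-\sigma$ and $\frac{\rho}{x^2}\big|_{x=1}=\rho$: from \eqref{PCrel} one gets $P_m(\sigma;1)=C_m(-\sigma;\rho)$, from the companion relation for the polynomials of the second kind $P^*_m(\sigma;1)=-(\sigma+\rho)C^*_m(-\sigma;\rho)$, and comparing the explicit series for $\chi(\sigma;x)$ at $x=1$ with the definition $\chi_0(z;a)=\frac1z\Phi(1,1+z;-a)$ gives $\chi(\sigma;1)=(\sigma+\rho)\chi_0(\sigma;\rho)$. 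Substituting these three evaluations, the bracket factors as $(\sigma+\rho)\bigl[C_m(-\sigma;\rho)\chi_0(\sigma;\rho)+C^*_m(-\sigma;\rho)\bigr]$, and the factor $(\sigma+\rho)$ cancels the $1/(\sigma+\rho)$ in the prefactor of \eqref{massnu}. Finally the bracket is recognized, through the Stieltjes-fraction identity \eqref{henrici_charlier} taken with $a=\rho$ and $z=\sigma$, as $\int_0^\infty \frac{C_m(x;\rho)}{\sigma+x}\,d\mathcal{P}_\rho(x)$, leaving exactly $\sigma\frac{\rho^m}{m!}\int_0^\infty \frac{C_m(x;\rho)}{\sigma+x}\,d\mathcal{P}_\rho(x)$, which is \eqref{massnubis}.
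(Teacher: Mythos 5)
Your proposal is correct and follows essentially the same route as the paper: reduce to the resolvent element from \eqref{eqnu}, expand via the spectral decomposition to get $\pi_m(\sigma)\int P_m(\sigma;x)(1-x)^{-1}d\psi(\sigma;x)$, simplify the prefactor with \eqref{defpin}, and then pass to the Poisson measure through \eqref{eqconnexion} at $z=1$ together with the evaluations $P_m(\sigma;1)=C_m(-\sigma;\rho)$, $P^*_m(\sigma;1)=-(\sigma+\rho)C^*_m(-\sigma;\rho)$, $\chi(\sigma;1)=(\sigma+\rho)\chi_0(\sigma;\rho)$ and the identity \eqref{henrici_charlier}. The only difference is cosmetic: you make explicit the orthonormal-basis normalization $\sqrt{r_k(\sigma)}\,P(\sigma;s_k^\pm(\sigma))$ and the $\pi_0(\sigma)=1$ bookkeeping, where the paper simply writes the expansion of $e_m$ and invokes the spectral identity.
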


\begin{proof}
By using Equation~\eqref{eqnu}, we know that 
$$
\P(\nu(\sigma)=m ) = \frac{\sigma}{m+\sigma+\rho}{}^te_0(\I-A(\sigma))^{-1}e_m .
$$
Since for arbitrary $m$
$$
e_m = {\pi_m(\sigma)}  \int_{-\infty}^\infty {P_m(\sigma;x)}P(\sigma;x)d \psi(\sigma;x),
$$
$P(\sigma;x)$ denoting the vector with components $P_\ell(\sigma;x)$ for $\ell \geq 0$, we have
$$
(\I-A(\sigma))^{-1}e_m= {\pi_m(\sigma)}  \int_{-\infty}^\infty \frac{P_m(\sigma;x) P(\sigma;x)}{1-x}d \psi(\sigma;x)
$$
by the spectral identity \cite{reed}. Equation~\eqref{massnu} easily follows by using the fact that $P_0(\sigma;x) =1$ for all $x$.

 For the particular value $z=1$, we have $$P_m(\sigma;1)=C_m(-\sigma;\rho) \mbox{ and } P^*_m(\sigma;1)=-(\sigma+\rho)C^*_m(-\sigma;\rho),
 $$
and in addition
$$
\chi(\sigma;1) = \frac{\sigma+\rho}{\sigma} \Phi(1;\sigma+1;-\rho)=(\sigma+\rho) \chi_0(\sigma;\rho),
$$
so that by using Equation~\eqref{eqconnexion}
{\color{blue}
\begin{align*}
 \int_{-\infty}^\infty \frac{P_m(\sigma;x) }{1-x}d \psi(\sigma;x) &=(\sigma+\rho) \left({C_m(-\sigma,\rho)} \chi_0(\sigma;\rho)+ C^*_m(-\sigma;\rho)\right) \\
 &= (\sigma+\rho) \int_0^\infty \frac{C_m(x;\rho)}{\sigma+x}d\mathcal{P}_\rho(x)
\end{align*}
where we have used Equation~\eqref{henrici_charlier} in the last step.}  Equation~\eqref{massnubis} then follows.
\end{proof}

\begin{corollary}[\cite{KMcG}]
Under the assumption that the system is empty at the time origin, the probability mass function of the random variable $N(t)$ equal to the number of customers in the system at time $t$ is given by Equation~\eqref{distibN}.
\end{corollary}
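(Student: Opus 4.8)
The plan is to read the Corollary as a Laplace inversion. Equation~\eqref{nuNt} states that $\P(\nu(\sigma)=m)=\sigma\int_0^\infty \P(N(t)=m\mid N(0)=0)\,e^{-\sigma t}\,dt$, so that, up to the factor $\sigma$, the quantity $\P(\nu(\sigma)=m)$ furnished by the preceding Proposition is exactly the Laplace transform (in the variable $\sigma$) of the unknown function $t\mapsto \P(N(t)=m\mid N(0)=0)$. It therefore suffices to re-expand the right-hand side of \eqref{massnubis} as a Laplace integral in $\sigma$ and read off its integrand.

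First I would write the Poisson measure explicitly, turning \eqref{massnubis} into $\P(\nu(\sigma)=m)=\sigma\frac{\rho^m}{m!}e^{-\rho}\sum_{n=0}^\infty \frac{C_m(n;\rho)}{\sigma+n}\frac{\rho^n}{n!}$. The decisive observation is that each factor is itself a Laplace transform, $\frac{1}{\sigma+n}=\int_0^\infty e^{-(\sigma+n)t}\,dt$. Substituting this and interchanging summation and integration yields $\P(\nu(\sigma)=m)=\sigma\int_0^\infty e^{-\sigma t}\,g(t)\,dt$ with $g(t)=\frac{\rho^m}{m!}e^{-\rho}\sum_{n=0}^\infty C_m(n;\rho)\frac{(\rho e^{-t})^n}{n!}$. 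Comparing with \eqref{nuNt} and invoking uniqueness of the Laplace transform (both candidate functions being continuous and bounded on $[0,\infty)$) forces $\P(N(t)=m\mid N(0)=0)=g(t)$.

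It then remains only to evaluate $g(t)$ in closed form, and here the two structural facts recorded earlier do the work. The symmetry relation \eqref{symCharlier} rewrites $C_m(n;\rho)=C_n(m;\rho)$, so the series in $g(t)$ becomes $\sum_{n=0}^\infty C_n(m;\rho)\frac{(\rho e^{-t})^n}{n!}$, which is precisely the exponential generating function \eqref{PCgen} evaluated at $x=m$, $a=\rho$, $z=\rho e^{-t}$. This gives $e^{\rho e^{-t}}(1-e^{-t})^m$, and substituting back produces $g(t)=\frac{(\rho(1-e^{-t}))^m}{m!}e^{-\rho(1-e^{-t})}$, which is exactly \eqref{distibN}.

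The computation is essentially routine once the Laplace-transform reading is in place; the only genuine point requiring care is the interchange of the infinite sum with the $t$-integral, since the Charlier polynomials $C_m(n;\rho)$ are not sign-definite. I would justify it by noting that, for fixed $m$, $C_m(n;\rho)$ is a polynomial of degree $m$ in $n$, so $|C_m(n;\rho)|=O(n^m)$ and $\sum_n |C_m(n;\rho)|\frac{\rho^n}{n!}e^{-nt}$ converges locally uniformly in $t\ge 0$, whence Fubini applies and the subsequent appeal to uniqueness of the Laplace transform is immediate. An alternative that sidesteps the inversion is to verify the identity directly: expanding $(1-e^{-t})^m e^{\rho e^{-t}}$ via \eqref{PCgen} and \eqref{symCharlier} and integrating against $\sigma e^{-\sigma t}$ term by term recovers \eqref{massnubis}.
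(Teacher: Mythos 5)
Your proposal is correct and follows essentially the same route as the paper: invert the Laplace transform in $\sigma$ appearing in \eqref{massnubis} via \eqref{nuNt}, then evaluate the resulting series using the symmetry relation \eqref{symCharlier} and the exponential generating function \eqref{PCgen}. The only difference is that you spell out the inversion (writing $1/(\sigma+n)$ as a Laplace integral, justifying the interchange of sum and integral, and invoking uniqueness), steps the paper compresses into the phrase ``by Laplace inversion.''
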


\begin{proof}
From Equation~\eqref{massnubis} and by Laplace inversion, we have 
\begin{eqnarray*}
\P(N(t)=m~|~N(0)=0)  &=&  \frac{\rho^m}{m!}  \int_0^\infty {C_m(x;\rho)}e^{-xt}d\mathcal{P}_\rho(x) \\ &=&  \frac{\rho^m}{m!} \sum_{n=0}^\infty {C_m(n;\rho)}e^{-nt} \frac{\rho^n}{n!} e^{-\rho} \\
&=&  \frac{\rho^m}{m!} \sum_{n=0}^\infty {C_n(m;\rho)}e^{-nt} \frac{\rho^n}{n!} e^{-\rho} \\
&=& e^{-\rho\left(1- e^{-t}\right)} \frac{(\rho (1-e^{-t}))^m}{m!},
\end{eqnarray*}
where we have used the symmetry relation~\eqref{symCharlier}  and the generating function of the Charlier polynomials given by~\eqref{PCgen}.
This completes the proof.
\end{proof}

We now consider the random variable $\kappa(\sigma)$, whose probability generating function seems to be unknown in the technical literature.

\begin{proposition}
The \textcolor{blue}{generating function} of the random variable $\kappa(\sigma)$, which is the number of arrivals or departures in an initially empty $M/M/\infty$ system during an exponential period of time with mean $1/\sigma$,  is given by
{\color{blue}\begin{equation}
    \label{pmfkappa}
   \E\left(z^{\kappa(\sigma)}\right) = \sigma z e^{\rho z(1-z)}   \sum_{n=0}^\infty  \frac{1}{\sigma+\rho+n-\rho z^2}    \frac{(-\rho z(1-z))^n}{n!}.
\end{equation}}
\end{proposition}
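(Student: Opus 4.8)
The plan is to pass to the generating function directly from the absorption law and then mimic, at the nonstandard argument $1/z$, the computation used for $\nu(\sigma)$ in the preceding proposition. Summing the joint law \eqref{kappamu} over $m$ gives $\P(\kappa(\sigma)=k)=\sum_{m\ge0}\frac{\sigma}{m+\sigma+\rho}\,{}^te_0A(\sigma)^{k-1}e_m$ for $k\ge1$; multiplying by $z^k$, summing over $k$ and recognising the Neumann series $\sum_{j\ge0}(zA(\sigma))^j=(\I-zA(\sigma))^{-1}$ yields
$$
\E\!\left(z^{\kappa(\sigma)}\right)=\sigma z\sum_{m=0}^\infty\frac{1}{m+\sigma+\rho}\,{}^te_0(\I-zA(\sigma))^{-1}e_m .
$$
This is the exact analogue of \eqref{eqnu}, with $\I-A(\sigma)$ replaced by $\I-zA(\sigma)$ and an extra factor $\sigma z$, so the spectral machinery of the proof of \eqref{massnu} applies verbatim.

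Invoking the spectral identity, I would write $e_m=\pi_m(\sigma)\int P_m(\sigma;x)P(\sigma;x)\,d\psi(\sigma;x)$ and use $P_0(\sigma;x)=1$ to get ${}^te_0(\I-zA(\sigma))^{-1}e_m=\pi_m(\sigma)\int\frac{P_m(\sigma;x)}{1-zx}\,d\psi(\sigma;x)$. The key simplification is that by \eqref{defpin} the awkward denominator cancels, $\frac{\pi_m(\sigma)}{m+\sigma+\rho}=\frac{1}{\sigma+\rho}\frac{\rho^m}{m!}$, so that
$$
\E\!\left(z^{\kappa(\sigma)}\right)=\frac{\sigma z}{\sigma+\rho}\sum_{m=0}^\infty\frac{\rho^m}{m!}\int_{-\infty}^\infty\frac{P_m(\sigma;x)}{1-zx}\,d\psi(\sigma;x).
$$

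The heart of the argument is to evaluate the inner integral in closed form, repeating the $z=1$ computation of the previous proposition but now at the argument $1/z$. Writing $\frac{1}{1-zx}=\frac1z\frac{1}{1/z-x}$ and applying \eqref{eqconnexion} with $z$ replaced by $1/z$, I need the specializations of $P_m$, $P^*_m$ and $\chi$ at $1/z$. Using \eqref{PCrel} one finds $P_m(\sigma;1/z)=z^m C_m(-b;a)$ with $a=\rho z^2$ and $b=\sigma+\rho-\rho z^2$ (note $a+b=\sigma+\rho$), and similarly $P^*_m(\sigma;1/z)=-(\sigma+\rho)z^{m+1}C^*_m(-b;a)$; the limit relation \eqref{PCcf} applied with $x=-b$ then gives $\chi(\sigma;1/z)=(\sigma+\rho)z\,\chi_0(b;a)$. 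Feeding these into \eqref{eqconnexion} and invoking Henrici's relation \eqref{henrici_charlier} with parameter $a=\rho z^2$ and shift $b$ collapses the integral onto the Poisson measure,
$$
\int_{-\infty}^\infty\frac{P_m(\sigma;x)}{1-zx}\,d\psi(\sigma;x)=(\sigma+\rho)z^m\int_0^\infty\frac{C_m(x;\rho z^2)}{\sigma+\rho-\rho z^2+x}\,d\mathcal{P}_{\rho z^2}(x).
$$

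Finally I would substitute this back to obtain $\E(z^{\kappa(\sigma)})=\sigma z\sum_{m\ge0}\frac{(\rho z)^m}{m!}\int_0^\infty\frac{C_m(x;\rho z^2)}{\sigma+\rho-\rho z^2+x}\,d\mathcal{P}_{\rho z^2}(x)$, interchange the sum and the integral, and sum the Charlier generating function \eqref{PCgen} with argument $\rho z$ and parameter $\rho z^2$, namely $\sum_m\frac{(\rho z)^m}{m!}C_m(x;\rho z^2)=e^{\rho z}(1-1/z)^x$. Writing $d\mathcal{P}_{\rho z^2}$ as atoms of mass $\frac{(\rho z^2)^n}{n!}e^{-\rho z^2}$ at $x=n$, the factor $(1-1/z)^n(\rho z^2)^n=(-\rho z(1-z))^n$ appears, and $e^{\rho z}e^{-\rho z^2}=e^{\rho z(1-z)}$ combine to yield exactly \eqref{pmfkappa}. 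I expect the main obstacle to lie in the third step: justifying the interchanges of summation (hence pinning down the range of $z$ for which everything converges absolutely) and, above all, carefully tracking the second-kind polynomial and continued-fraction specializations at the nonstandard point $1/z$, since the limit \eqref{PCcf} must be applied with the shifted parameter $b$ rather than with $\sigma$ as in the $\nu(\sigma)$ computation.
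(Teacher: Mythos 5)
Your proposal is correct and follows essentially the same route as the paper: after reducing to $\frac{\sigma z}{\sigma+\rho}\sum_m\frac{\rho^m}{m!}\int\frac{P_m(\sigma;x)}{1-zx}\,d\psi(\sigma;x)$, you evaluate the integral via \eqref{eqconnexion} at the point $1/z$ with the shifted parameters $a=\rho z^2$, $Z=\sigma+\rho-\rho z^2$, collapse it onto the Poisson measure through \eqref{henrici_charlier}, and sum the Charlier generating function, exactly as in the paper's proof. The only cosmetic difference is that you package the sum over $k$ as the resolvent $(\I-zA(\sigma))^{-1}$ before diagonalizing, whereas the paper diagonalizes $A(\sigma)^{k-1}$ first and then sums the geometric series in $x$; the two are interchangeable.
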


\begin{proof}
From Equation~\eqref{kappamu}, we have
\begin{eqnarray*}
\P(\kappa(\sigma)=k) &=& \sum_{m=0}^\infty \P(\kappa(\sigma)=k, \nu(\sigma)=m)   \\
&=&\sum_{m=0}^\infty \frac{\sigma}{m+\sigma+\rho} {}^te_0 A(\sigma)^{k-1} e_m \nonumber \\
&=& \frac{\sigma}{\sigma+\rho}\sum_{m=0}^\infty \frac{\rho^m}{m!} \int_{-\infty}^\infty x^{k-1}P_m(\sigma;x)d\psi(\sigma;x)  .
\end{eqnarray*}
{\color{blue}
It follows that the generating function of $\kappa(\sigma)$ is given for $|z|<1$ by
$$
\E\left(z^{\kappa(\sigma)}\right) = \frac{\sigma }{\sigma+\rho}\sum_{m=0}^\infty \frac{\rho^m}{m!} \int_{-\infty}^\infty \frac{z P_m(\sigma;x)}{1- z x}d\psi(\sigma;x)  .
$$

By using Equation~\eqref{eqconnexion} and setting $Z= {\sigma+\rho}-\rho  z^2$, we have for real $z\neq 0$
\begin{align*}
 \int_{-\infty}^\infty \frac{ z P_m(\sigma;x)}{1- z x}d\psi(\sigma;x) &= \int_{-\infty}^\infty \frac{  P_m(\sigma;x)}{\frac{1}{z}-  x}d\psi(\sigma;x) \\
&= P_m\left(\sigma;\frac{1}{z}\right) \chi\left(\sigma;\frac{1}{z}\right) - P^*_m\left(\sigma;\frac{1}{z}\right) \\
&= (\sigma+\rho) z^{m+1} \left( C_m(-Z;\rho z^2) \chi_0\left(Z;\rho z^2  \right) + C^*_m(-Z;\rho z^2)\right) \\
&= (\sigma+\rho) z^{m+1} \int_0^\infty \frac{C_m(x;\rho z^2)}{Z+x} d\mathcal{P}_{\rho z^2} (x).
\end{align*}
It follows that
$$
\E\left(z^{\kappa(\sigma)}\right) = \sigma z e^{\rho z} \int_0^\infty \frac{\left(1-\frac{1}{z}  \right)^x}{\sigma+\rho+x-\rho z^2} d\mathcal{P}_{\rho z^2} (x)
$$
and Equation~\eqref{pmfkappa} follows.
}
\end{proof}
 
 {\color{blue} Let $K(t)$ be the number of arrivals and departures in the system up to time $t$. By definition, we have
$$
\P(\kappa(\sigma)=k ) =\int_0^\infty \P(K(t) = k) \sigma e^{-\sigma t} dt.
$$
We then have the following result.

\begin{corollary}
    The generating function of the number $K(t)$ of arrivals and departures in the system up to time $t$ is given by
    \begin{equation}
        \label{funcK}
          \E\left(z^{K(t)}\right) = z e^{ \rho z (1-z)(1-e^{-t}) -\rho(1-z^2)t} .
    \end{equation}
\end{corollary}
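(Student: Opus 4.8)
The plan is to recognize Equation~\eqref{pmfkappa} as the Laplace transform, in the time variable, of the sought generating function, and then to read off $\E(z^{K(t)})$ by uniqueness of the transform. Starting from the defining relation
$$
\P(\kappa(\sigma)=k ) =\int_0^\infty \P(K(t) = k) \sigma e^{-\sigma t} dt,
$$
I would multiply by $z^k$ and sum over $k\ge 1$. For $|z|\le 1$ the summand is dominated by $\P(K(t)=k)$, so Fubini's theorem justifies exchanging the sum and the integral and yields
$$
\E\!\left(z^{\kappa(\sigma)}\right) = \sigma \int_0^\infty \E\!\left(z^{K(t)}\right) e^{-\sigma t}\,dt .
$$
In other words, $\sigma^{-1}\E\!\left(z^{\kappa(\sigma)}\right)$ is exactly the Laplace transform of the map $t\mapsto \E\!\left(z^{K(t)}\right)$ evaluated at the point $\sigma$.

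Next I would rewrite the explicit series in Equation~\eqref{pmfkappa} as an integral against $e^{-\sigma t}$. Setting $a=\rho z(1-z)$ and using the elementary identity $\frac{1}{\sigma+\rho+n-\rho z^2}=\int_0^\infty e^{-(\sigma+\rho+n-\rho z^2)t}\,dt$ (valid since $\rho+n-\rho z^2>0$ for real $z\in(-1,1)$), I would interchange the sum and the integral, again justified by absolute convergence, and sum the resulting exponential series via $\sum_{n\ge 0}\frac{(-a e^{-t})^n}{n!}=e^{-a e^{-t}}$. This gives
$$
\frac{1}{\sigma}\E\!\left(z^{\kappa(\sigma)}\right) = z\, e^{a}\int_0^\infty e^{-(\sigma+\rho-\rho z^2)t}\, e^{-a e^{-t}}\,dt .
$$

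The final step is purely algebraic. Combining the prefactor $e^{a}$ with $e^{-a e^{-t}}$ produces $e^{a(1-e^{-t})}=e^{\rho z(1-z)(1-e^{-t})}$, while the remaining exponent factors as $e^{-\rho(1-z^2)t}\,e^{-\sigma t}$. Hence $\sigma^{-1}\E\!\left(z^{\kappa(\sigma)}\right)$ equals $\int_0^\infty z\, e^{\rho z(1-z)(1-e^{-t})-\rho(1-z^2)t}\,e^{-\sigma t}\,dt$. Comparing this with the Laplace-transform representation from the first step and invoking uniqueness of the Laplace transform (both integrands being continuous and exponentially bounded in $t$), I identify $\E\!\left(z^{K(t)}\right)= z\, e^{\rho z(1-z)(1-e^{-t})-\rho(1-z^2)t}$, which is precisely Equation~\eqref{funcK}.

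I expect the only delicate point to be the justification of the two interchanges of sum and integral, together with the domain of $z$: the argument above is cleanest for real $z\in(-1,1)$, where all exponents carry the correct sign and every series and integral converges absolutely; the resulting identity then extends to complex $z$ in the unit disc by analyticity of both sides. Everything else is a routine computation.
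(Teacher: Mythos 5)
Your proposal is correct and follows essentially the same route as the paper: the paper's proof is a one-line term-by-term Laplace inversion of Equation~\eqref{pmfkappa}, summing the resulting exponential series to get $e^{\rho z(1-z)(1-e^{-t})-\rho(1-z^2)t}$. You merely spell out the uniqueness-of-Laplace-transform argument and the justification of the interchanges, which the paper leaves implicit.
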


\begin{proof}
    By Laplace inversion, we have from Equation~\eqref{pmfkappa}
    $$
     \E\left(z^{K(t)}\right)= z e^{\rho z(1-z)} \sum_{n=0}^\infty \frac{(-\rho z (1-z))^n}{n!}  e^{-(\rho+n-\rho z^2)t}
    $$
    and Equation~\eqref{funcK} follows.
\end{proof}

It is easily checked that the first moment of $K(t)$ is given by 
$$
\E(K(t)) = 1-\left(1-e^{-t}\right) \rho +2 t \rho
$$
and the second moment by
$$
E(K(t)^2) = e^{-2 t} \rho  \left(\rho +e^t \left(4+(-2+4 t) \rho +e^t (-4+6 t+\rho +4 (-1+t) t \rho )\right)\right).
$$
By taking Laplace transforms, we obtain
\begin{equation}
    \label{Ekappa}
    \E(\kappa(\sigma)) = 1+\frac{\rho  (2+\sigma )}{\sigma  (1+\sigma )}
\end{equation}
and
$$
 \E(\kappa(\sigma)^2) =   1+ \frac{\rho (8+3 \sigma )}{\sigma(1+\sigma)}+\frac{2 \rho ^2 (8+\sigma  (16+\sigma  (7+\sigma )))}{\sigma ^2 (1+\sigma )^2 (2+\sigma )}. \label{Ekappa2}
$$
}

 For the distribution of $\mathfrak{d}(\sigma)$, we use the same technique and we obtain the following result.
 
 \begin{proposition}
 \label{gend}
 The \textcolor{blue}{generating}  function of the random variable $\mathfrak{d}(\sigma)$, equal to the number of departures from the initially empty $M/M/\infty$ system in an exponentially distributed time frame with mean $1/\sigma$, is given by
{\color{blue} \begin{eqnarray}
     \label{massd}
      \E\left(z^{\mathfrak{d}(\sigma)}\right) &=& \sigma \sum_{n=0}^\infty\frac{(\sigma +n)^n e^{-(\sigma + n)}}{n!} \frac{1}{\sigma+\rho+n- \rho z} \\
      &=& \frac{\sigma}{\sigma +\rho(1-z)}\Phi(1,\sigma+\rho(1-z)+1;\rho(1-z)) . \label{massdbis}
\end{eqnarray}
}
 \end{proposition}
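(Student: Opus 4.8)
The plan is to mirror the proof of the generating function of $\kappa(\sigma)$, starting from the joint law \eqref{kappamu} together with the conservation relation \eqref{conservationbis}, which gives $\mathfrak{d}(\sigma) = (\kappa(\sigma)-\nu(\sigma)-1)/2$. Writing $\E(z^{\mathfrak{d}(\sigma)}) = \sum_{k\ge 1,\,m\ge 0} z^{(k-m-1)/2}\,\P(\kappa(\sigma)=k,\nu(\sigma)=m)$ and substituting ${}^te_0 A(\sigma)^{k-1}e_m = \pi_m(\sigma)\int_{-\infty}^\infty x^{k-1}P_m(\sigma;x)\,d\psi(\sigma;x)$, together with the simplification $\tfrac{\sigma}{m+\sigma+\rho}\pi_m(\sigma)=\tfrac{\sigma}{\sigma+\rho}\tfrac{\rho^m}{m!}$ (immediate from \eqref{defpin}), the double sum reduces to a single spectral integral. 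The device that makes this work, and which is the one genuinely new ingredient relative to the $\kappa(\sigma)$ computation, is to set $w=\sqrt z$, so that the half-integer exponent $z^{(k-m-1)/2}$ becomes $w^{k-m-1}$ and the geometric series $\sum_{k\ge 1}(wx)^{k-1}=1/(1-wx)$ can be summed inside the integral. This yields
$$\E\!\left(z^{\mathfrak{d}(\sigma)}\right) = \frac{\sigma}{\sigma+\rho}\sum_{m=0}^\infty \frac{\rho^m}{m!}\,w^{-m}\int_{-\infty}^\infty \frac{P_m(\sigma;x)}{1-wx}\,d\psi(\sigma;x).$$

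To reach the explicit form \eqref{massd} I would expand the purely discrete measure $d\psi(\sigma;x)$ of \eqref{defpsi} directly over its atoms $s_k^\pm(\sigma)$. Since the Charlier relation \eqref{PCrel} shows $P_m(\sigma;-x)=(-1)^mP_m(\sigma;x)$ and the spectrum is symmetric, the two atoms $s_k^\pm$ combine and only contributions with $k+m$ odd (equivalently $\mathfrak{d}(\sigma)\in\N$) survive. Reindexing $\kappa=m+2\mathfrak{d}+1$ and using $(s_k^+)^2=\rho/(\sigma+\rho+k)$, the geometric sum over $\mathfrak{d}$ produces the factor $1/(\sigma+\rho+k-\rho z)$, while the remaining sum over $m$ is evaluated by the generating function \eqref{expogenPn} at the point $s_k^+$, giving $e^{\rho}\big((\sigma+k)/(\sigma+\rho+k)\big)^{k}$. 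Combining this with the mass $r_k(\sigma)$ of \eqref{defrk}, the factors $(\sigma+\rho+k)^k$ cancel and the exponentials collapse to $e^{-(\sigma+k)}$, leaving exactly $\sigma\sum_k \tfrac{(\sigma+k)^k e^{-(\sigma+k)}}{k!}\tfrac{1}{\sigma+\rho+k-\rho z}$, which is \eqref{massd}.

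For the equivalent Kummer form \eqref{massdbis} I would instead transform the same integral exactly as in the $\kappa(\sigma)$ proof: rewrite $1/(1-wx)=w^{-1}/(w^{-1}-x)$, apply the connection formula \eqref{eqconnexion} at $1/w$, and pass through the Charlier identities for $P_m$ and $P_m^*$ (noting that $\rho w^2=\rho z$) and through \eqref{henrici_charlier} to re-express everything against the Poisson measure $d\mathcal{P}_{\rho z}$. The powers $w^{\pm m}$ cancel, and summing over $m$ with the Charlier generating function \eqref{PCgen} collapses the series to $\sigma e^{\rho}\int_0^\infty \tfrac{(1-1/z)^x}{\sigma+\rho-\rho z+x}\,d\mathcal{P}_{\rho z}(x)$. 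Writing $1/(\sigma+\rho-\rho z+x)=\int_0^1 t^{\sigma+\rho-\rho z+x-1}\,dt$, resumming the $x$-integral, and matching against the integral representation of $\Phi$ then gives \eqref{massdbis}.

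I expect the main obstacle to be the bookkeeping forced by $w=\sqrt z$: one must verify that the half-integer powers, the parity $P_m(\sigma;-x)=(-1)^mP_m(\sigma;x)$, and the symmetric spectrum conspire so that only integer values of $\mathfrak{d}(\sigma)$ contribute, and that the interchanges of summation and integration are legitimate on the relevant disc (which requires $\sqrt{|z|\rho/(\sigma+\rho)}<1$ and $|z|\rho/(\sigma+\rho)<1$). A secondary point is that the two representations \eqref{massd} and \eqref{massdbis} look quite different, and equating them directly would require an Abel/tree-function identity; here, however, the equality is automatic, since both are produced from the same spectral quantity by the two admissible ways of evaluating the integral.
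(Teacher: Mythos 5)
Your proposal is correct, and for Equation~\eqref{massd} it is essentially the paper's own argument: both start from \eqref{conservationbis}, write $\P(\mathfrak{d}(\sigma)=k)=\sum_m\frac{\sigma}{m+\sigma+\rho}\,{}^te_0A(\sigma)^{2k+m}e_m$, pass to the spectral integral, sum over $m$ with the exponential generating function \eqref{expogenPn} at $z=\rho x$ (giving $e^{\rho}(1-x^2)^{\rho/x^2-\sigma-\rho}$), and then evaluate on the atoms $s_k^{\pm}(\sigma)$, where the geometric series in $x^{2k}$ produces the factor $1/(\sigma+\rho+k-\rho z)$; your cancellation bookkeeping ($2r_k$, $e^{-(\sigma+\rho+k)}e^{\rho}=e^{-(\sigma+k)}$, the $(\sigma+\rho+k)^k$ factors) checks out. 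The substitution $w=\sqrt{z}$ is a harmless detour rather than a new ingredient: once you reindex by $\kappa=m+2\mathfrak{d}+1$, only the even power $x^{2\mathfrak{d}}$ and $z^{\mathfrak{d}}$ appear and no square roots are needed; the paper simply never introduces them, and your own parity argument ($P_m(\sigma;-x)=(-1)^mP_m(\sigma;x)$ with the symmetric spectrum) is exactly what shows the two organizations agree. Where you genuinely diverge is Equation~\eqref{massdbis}: the paper obtains it from \eqref{massd} by recognizing the right-hand side as the pole (residue) expansion of $\frac{1}{\sigma-X}\Phi(1,\sigma-X+1;-X)$ at the points $\sigma+n$ and setting $X=-\rho(1-z)$, whereas you re-derive it directly from the spectral integral via the connection formula \eqref{eqconnexion} at $1/w$, the Stieltjes relation \eqref{henrici_charlier}, the Charlier generating function, and the Euler integral for $\Phi$. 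Your route is longer but has two advantages: it parallels the $\kappa(\sigma)$ computation exactly, and it establishes \eqref{massd} and \eqref{massdbis} as two independent evaluations of the same spectral quantity, so their equality (which would otherwise require an Abel-type identity, as you note) comes for free; the paper's route is shorter once \eqref{massd} is in hand but requires the separate residue computation for the Kummer function.
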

 
 \begin{proof}
 By using Equation~\eqref{conservationbis}, we have
{\color{blue} \begin{eqnarray*}
 \P(\mathfrak{d}(\sigma)=k) &=& \P(\kappa(\sigma)=2k+1+\nu(\sigma)) \\
&=& \sum_{m=0}^\infty \P(\kappa(\sigma)=2k+m+1,\nu(\sigma)=m)\\
&=& \sum_{m=0}^\infty \frac{\sigma}{m+\sigma+\rho} {}^te_0 A(\sigma)^{2k+m} e_m\\
&=&\frac{\sigma}{\sigma+\rho}\sum_{m=0}^\infty \frac{\rho^m}{m!} \int_{-\infty}^\infty x^{2k+m}P_m(\sigma;x)d\psi(\sigma;x)\\
&=&\frac{\sigma}{\sigma+\rho} \int_{-\infty}^\infty x^{2k}\mathcal{C}\left(\frac{\rho-(\sigma+\rho)x^2}{x^2};\frac{\rho}{x^2};\rho  \right)d\psi(\sigma;x),
\end{eqnarray*}
where we have used the exponential generating function of the polynomials $P_n(\sigma;x)$ given by Equation~\eqref{expogenPn}. Hence,
$$
\P(\mathfrak{d}(\sigma)=k) =  \frac{\sigma e^{{\rho}}}{\sigma+\rho}\int_{-\infty}^\infty x^{2k} (1-x^2)^{\frac{\rho-(\sigma+\rho) x^2}{x^2}} d\psi(\sigma;x)
$$
and  Equation~\eqref{massd} is obtained by using the definition of the measure $d\psi(\sigma;x)$ given by Proposition~\eqref{defpsi}.

To obtain Equation~\eqref{massdbis}, let us consider the function
$$
\frac{1}{\sigma -X}\Phi(1,\sigma-X+1;-X) = \sum_{m=0}^\infty \frac{(-X)^m}{\prod_{\ell=0}^m(\sigma+\ell-X)},
$$
where $\Phi(\alpha,\beta;z)$ is the Kummer function. The above function has poles at point $\sigma+n$ for $n\geq 0$. At point $\sigma+n$, the residue is equal to 
$$
\frac{(\sigma+n)^n}{n!}e^{-(\sigma+n)}.
$$
It follows that
$$    
\frac{1}{\sigma -X}\Phi(1,\sigma-X+1;-X) = \sum_{n=0}^\infty  \frac{(\sigma+n)^n}{n!}e^{-(\sigma+n)}  \frac{1}{\sigma+n-X}
$$
and Equation~\eqref{massdbis} follows by taking $X=-\rho(1-z)$.
}
 \end{proof}

{\color{blue} It is worth noting that taking $z=1$ in Equation~\eqref{massd} reads
$$
\sum_{n=0}^\infty\frac{(\sigma +n)^{n-1} e^{-(\sigma + n)}}{n!} =1,
$$
which is precisely Euler's formula \cite{polya}
$$
e^{\alpha z} = \alpha \sum_{n=0}^\infty \frac{(\alpha+n)^n}{n!}(z e^{-z})^n
$$
for $z=1$ and $\alpha=\sigma$.

As a consequence of Proposition~\eqref{gend}, we can state the following result.

\begin{proposition}
    The random variable $D(t)$ is Poisson with mean $\rho(e^{-t}-1+t)$.
\end{proposition}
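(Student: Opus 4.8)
The plan is to exploit the fact, recorded in Equation~\eqref{defDt}, that $\P(\mathfrak{d}(\sigma)=m)=\int_0^\infty \P(D(t)=m\mid N_0=0)\,\sigma e^{-\sigma t}\,dt$. Summing against $z^m$ shows that the generating function computed in Proposition~\ref{gend} is precisely $\sigma$ times the Laplace transform of $t\mapsto \E(z^{D(t)})$, i.e.
\[
\E\!\left(z^{\mathfrak{d}(\sigma)}\right)=\sigma\int_0^\infty \E\!\left(z^{D(t)}\right)e^{-\sigma t}\,dt .
\]
So the whole proof amounts to inverting this Laplace transform in $\sigma$ and recognizing the result. I would start from the closed form~\eqref{massdbis} rather than the series~\eqref{massd}, because in~\eqref{massd} the variable $\sigma$ appears simultaneously in the weight $(\sigma+n)^n e^{-(\sigma+n)}$ and in the denominator $\sigma+\rho+n-\rho z$, so a term-by-term inversion would produce an awkward convolution; the Kummer form decouples these dependencies.

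The key computational step uses the integral representation of $\Phi$ quoted in the excerpt, with $\alpha=1$, $\beta=\sigma+\rho(1-z)+1$ and argument $\rho(1-z)$. Since $\Gamma(\beta)/\Gamma(\beta-1)=\sigma+\rho(1-z)$, this factor exactly cancels the denominator $\sigma+\rho(1-z)$ in~\eqref{massdbis}, leaving
\[
\E\!\left(z^{\mathfrak{d}(\sigma)}\right)=\sigma\int_0^1 (1-u)^{\sigma+\rho(1-z)-1}\,e^{\rho(1-z)u}\,du .
\]
I would then make the substitution $u=1-e^{-t}$ (so $1-u=e^{-t}$ and $du=e^{-t}\,dt$), which turns $(1-u)^{\sigma+\rho(1-z)-1}\,du$ into $e^{-\sigma t}e^{-\rho(1-z)t}e^{t}e^{-t}\,dt=e^{-\sigma t}e^{-\rho(1-z)t}\,dt$ and maps $[0,1]$ onto $[0,\infty)$. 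This yields
\[
\E\!\left(z^{\mathfrak{d}(\sigma)}\right)=\sigma\int_0^\infty e^{-\sigma t}\,e^{\rho(1-z)(1-e^{-t}-t)}\,dt ,
\]
which is already in the required Laplace form with kernel $\sigma e^{-\sigma t}$.

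By uniqueness of the Laplace transform I would then read off $\E(z^{D(t)})=e^{\rho(1-z)(1-e^{-t}-t)}=e^{\rho(t-1+e^{-t})(z-1)}$, and recognize this as the probability generating function of a Poisson variable with parameter $\rho(t-1+e^{-t})=\rho(e^{-t}-1+t)$, which is exactly Equation~\eqref{distD}. I expect the only delicate points to be bookkeeping ones rather than conceptual: checking the admissibility condition $\Re(\beta)>\Re(\alpha)>0$ for the integral representation (which holds since $\sigma+\rho(1-z)>0$ for $z\in[0,1]$ and $\sigma,\rho>0$, with analytic continuation in $z$ handling the rest) and invoking uniqueness of the Laplace transform to justify the term-by-term identification. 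The substitution matching the exponent $1-e^{-t}-t$ against the stated mean is the main place to be careful with signs.
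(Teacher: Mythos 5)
Your proof is correct and follows essentially the same route as the paper: starting from the Kummer-function form~\eqref{massdbis}, applying the integral representation of $\Phi$ with $\alpha=1$ (so that $\Gamma(\beta)/\Gamma(\beta-1)$ cancels the prefactor), changing variables to map $[0,1]$ onto $[0,\infty)$ with kernel $\sigma e^{-\sigma t}$, and inverting the Laplace transform to read off the Poisson generating function. The only cosmetic difference is that you perform the substitution $u=1-e^{-t}$ in one step where the paper first reflects $u\mapsto 1-u$ and then sets $u=e^{-t}$; your exponent $e^{\rho(1-z)u}$ is in fact the correct one (the paper's displayed $e^{z(1-\rho)u}$ is a typo).
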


\begin{proof}
 By using the integral representation of Kummer function \cite{Abramowitz}, we have
 \begin{eqnarray*}
    \E\left(z^{\mathfrak{d}(\sigma)}\right) &=& \sigma \int_0^1 e^{z(1-\rho)u}(1-u)^{\sigma+\rho(1-z)-1}du \\ &=& \sigma e^{\rho(1-z)}\int_0^1 e^{-z(1-\rho)u}u^{\sigma+\rho(1-z)-1}du
 \end{eqnarray*}
 and via the variable change $u=e^{-t}$, we have
$$
  \E\left(z^{\mathfrak{d}(\sigma)}\right) = \sigma e^{\rho(1-z)} \int_0^\infty  e^{-z(1-\rho)e^{-t}-(\sigma+\rho(1-z))t}dt
  $$
  and then since 
  $$
\int_0^\infty  \E\left(z^{D(t)}\right) \sigma e^{-\sigma t} dt =  \E\left(z^{\mathfrak{d}(\sigma)}\right)
$$
  we have by Laplace inversion
  $$
   \E\left(z^{D(t)}\right) = e^{\rho(1-z) (1-e^{-t}-t)},
   $$
which is the generating function of Poisson random variable with mean $\rho(e^{-t}-1+t)$.   
\end{proof}

We thus have proved that we can recover the result for the random variable $KD(t)$ obtained by probabilistic arguments via spectral theory. Note that the generating function of $K(t)$ seems to be unknown in the queuing literature. In the next section, we investigate how the results apply for an $M/M/\mathfrak{c}/\mathfrak{c}$ queue where $\mathfrak{c}$is some positive integer.

\section{Transient characteristics of the $M/M/\mathfrak{c}/\mathfrak{c}$ queue}
\label{mmcc}

In the case of an $M/M/\mathfrak{c}/\mathfrak{c}$ queue, we consider as in the previous sections an observer joining an initially empty queue and staying in the system for an exponentially distributed random time with mean $1/\sigma$ with $\sigma>0$.

\subsection{Notation}

Let us introduce the discrete-time process $\left(\mathfrak{n}^{[\mathfrak{c}]}_k(\sigma)\right)$ describing the number of customers in the $M/M/\mathfrak{c}/\mathfrak{c}$ queue without taking into account the observer; $\mathfrak{n}^{[\mathfrak{c}]}_k(\sigma)$ is the number of customers in the queue at the $k$th event corresponding either to a customer arrival, or a service completion or the departure of the observer from the system. When the observer leaves the system, the process $\left(\mathfrak{n}^{[\mathfrak{c}]}_k(\sigma)\right)$ is absorbed at state $-1$. 

The state space of the  discrete-time Markov chain $\left(\mathfrak{n}^{[\mathfrak{c}]}_k(\sigma)\right)$ is $\{-1,0, 1,2, \ldots, \cc\}$  with transition matrix $\mathcal{A}^{[\mathfrak{c}]}(\sigma)$ given by
$$
\mathcal{A}^{[\mathfrak{c}]}(\sigma)= \begin{pmatrix}
1 & 0 & 0 & 0 & 0 & \ldots \\
\frac{\sigma}{\sigma+\rho} & 0 & \frac{\rho}{\sigma+\rho} & 0 & 0 & \ldots \\
\frac{\sigma}{1+\rho+\sigma} &\frac{1}{1+\rho+\sigma} & 0 & \frac{\rho}{\rho+\sigma+1}  & 0 & \ldots \\
\frac{\sigma}{2+\rho+\sigma} & 0 &\frac{2}{2+\rho+\sigma}  & 0 &   & \ldots \\
\vdots & \vdots &  & &  & \frac{\rho}{\sigma+\mathfrak{c}-1+\rho}\\
\frac{\sigma}{\sigma+\mathfrak{c}} & \vdots &  & & \frac{\mathfrak{c}}{\mathfrak{c}+\sigma} & 0
\end{pmatrix}.
$$
The non-zero coefficients of the matrix $\mathcal{A}^{[\mathfrak{c}]}(\sigma)$ are given by $\mathcal{A}^{[\mathfrak{c}]}_{-1,-1}(\sigma)=1$ (the state $-1$ being absorbing) and for $0 \leq n < \mathfrak{c}$
$$
\mathcal{A}^{[\mathfrak{c}]} _{n,-1}(\sigma)= \frac{\sigma}{n+\sigma+ \rho},\,  \mathcal{A}^{[\mathfrak{c}]}_{n,n-1}(\sigma)= \frac{n}{n+\sigma+\rho}, \mbox{ and }\mathcal{A}^{[\mathfrak{c}]}_{n,n+1}(\sigma) = \frac{\rho}{n+\sigma+\rho}
$$
along with
$$
\mathcal{A}^{[\mathfrak{c}]}_{\mathfrak{c},-1}(\sigma)= \frac{\sigma}{\sigma+ \mathfrak{c}}, \mbox{ and }\mathcal{A}^{[\mathfrak{c}]}_{\mathfrak{c},\mathfrak{c}-1}(\sigma) = \frac{\mathfrak{c}}{\mathfrak{c}+\sigma}.
$$

The sub-matrix $A^{[\mathfrak{c}]}(\sigma)$ 
obtained by deleting the first row and the first column of matrix $\mathcal{A}^{[\mathfrak{c}]}(\sigma)$ is a tridiagonal matrix with non-zero coefficients given for   $0\leq n < \mathfrak{c} $
$$
A^{[\mathfrak{c}]}_{n,n+1}(\sigma)= \frac{\rho}{n+\sigma+\rho}, \quad A^{[\mathfrak{c}]}_{n,n-1}(\sigma)= \frac{n}{n+\sigma+\rho}
$$
together with
$$
A^{[\mathfrak{c}]}_{\mathfrak{c},\mathfrak{c}-1}(\sigma)= \frac{\mathfrak{c}}{\mathfrak{c}+\sigma}
$$
with the convention $A^{[\mathfrak{c}]}_{-1,0}(\sigma)=0$.  (We let the indices of the coefficients of matrix $A^{[\mathfrak{c}]}(\sigma)$ range from 0 to $\cc$ as it is more convenient for recurrence relations appearing in the analysis.) The $(\mathfrak{c}+1)\times (\mathfrak{c}+1)$ matrix $A^{[\mathfrak{c}]}(\sigma)$ is sub-stochastic and describes the transition probabilities of the Markov chain $(\mathfrak{n}_k^{[\mathfrak{c}]}(\sigma))$ before absorption.

\subsection{Spectral properties}

Let $H^{[\mathfrak{c}]}(\sigma) \subset H(\sigma)$ be the vector space such that the components of a vector $f \in H^{[\mathfrak{c}]}(\sigma)$ are zero for indices larger than $\mathfrak{c}$. The matrix $A^{[\mathfrak{c}]}(\sigma)$ is not selfadjoint  but can nevertheless be diagonalized. An eigenvalue of $A^{[\mathfrak{c}]}(\sigma)$ is such that there exists a vector $f(x) \in H^{[\mathfrak{c}]}(\sigma)$ satisfying the recursion
$$
\rho f_{n+1}(x) -(\rho +n +\sigma)x f_n(x) + nf_{n-1}(x) =0
$$
for $0\leq n <\mathfrak{c}$ (with the convention $f_{-1}(x)=0$) and
\begin{equation}
    \label{condlimit}
     -(\mathfrak{c}+\sigma) x f_\mathfrak{c}(x) + \mathfrak{c} f_{\mathfrak{c}-1}(x) =0.
\end{equation}
Without loss of generality, we can set $f_0(x)=1$ and $f_n(x)$ then satisfies the same recursion as $P_n(\sigma;x)$ for $n=-1,0,1 \ldots, \mathfrak{c}$. The limiting condition~\eqref{condlimit} implies the point $x$ is an eigenvalue only if
$$
 -(\mathfrak{c}+\sigma) x f_\mathfrak{c}(x) + \mathfrak{c} f_{\mathfrak{c}-1}(x) =0 \Longleftrightarrow P_{\mathfrak{c}+1}(\sigma;x) = xP_\mathfrak{c}(\sigma;x).
$$

For $\sigma>0$, the moment functional associated with the measure $d\psi(\sigma;x)$  is positive-definite on the set of atoms $\{s_k^\pm(\sigma), k=0,1, \ldots\}\subset[-1,1]$ since the mass at each atom is positive. From the theory of orthogonal polynomials \cite[Theorem~5.2]{chihara}, the zeros of $P_n(\sigma;x)$, $n\geq 1$ are real, simple and  located in the interior of $[-1,1]$. In addtion, the roots of $P_n(\sigma;x)$ and $P_{n+1}(\sigma;x)$ are interleaved. We then easily deduce via geometric arguments that the equation $P_{\mathfrak{c}+1}(\sigma;x) = xP_\mathfrak{c}(\sigma;x)$ has $\mathfrak{c}+1$ real and simple solutions, denoted by $\xi_{\mathfrak{c},k}(\sigma)$ for $k=0, \ldots,\mathfrak{c}$.

Let $\mathbf{P}_k(\sigma)$ for $k=0,\ldots,\mathfrak{c}$ denote the column vector whose $n$th entry is equal to $P_n(\sigma;\xi_{\mathfrak{c},k}(\sigma))$ for $n=0,\ldots,\mathfrak{c}$. The vectors $\mathbf{P}_k(\sigma)$ for $k=0,\ldots,\mathfrak{c}$ form an orthogonal basis of the space $H^{[\mathfrak{c}]}(\sigma)$. Moreover, let  $\mathbf{P}^*_k(\sigma)$ for $k=0,\ldots,\mathfrak{c}$ denote the column vector whose $n$th entry is equal to $P^*_n(\sigma;\xi_{\mathfrak{c},k}(\sigma))$ for $n=0,\ldots,\mathfrak{c}$.

By using the orthogonality of the vectors $\mathbf{P}_k(\sigma)$ for $k=0,\ldots,\mathfrak{c}$, we have on the one hand
$$
(e_0,(z \mathbbm{I}- A^{\cc}(\sigma))^{-1}e_0) = \sum_{k=0}^\cc \frac{1}{\| \mathbf{P}_k(\sigma)\|^2}\frac{1}{z -\xi_{\cc,k}(\sigma)}.
$$
On the other hand, we have for any constant $\gamma$ and fixed $z$ not in the set of the roots $\{\xi_{\cc,k}(\sigma), k=0, \ldots, \cc\}$
\begin{multline*}
(z \mathbbm{I}- A^{[\cc]}(\sigma))(-\mathbf{P}^*(\sigma;z) +\gamma \mathbf{P}(\sigma;z)) = e_0 - \\ \frac{\rho}{\cc+\rho}(P^*_{\cc+1}(\sigma;z)-z P^*_{\cc}(\sigma;z) -\gamma (P_{\cc+1}(\sigma;z)-z P_{\cc}(\sigma;z)))e_{\cc},
\end{multline*}
where $\mathbf{P}(\sigma;z)$ (resp. $\mathbf{P}^*(\sigma;z)$) is the vector with entries ${P}_n(\sigma;z)$ (resp. ${P}^*_n(\sigma;z)$) for $n=0,\ldots,\cc$. By choosing
$$
\gamma=  \frac{P^*_{\cc+1}(\sigma;z)-z P^*_{\cc}(\sigma;z)}{P_{\cc+1}(\sigma;z)-z P_{\cc}(\sigma;z)},
$$
we have
$$
(z \mathbbm{I}- A^{\cc}(\sigma))^{-1}e_0 =-\mathbf{P}^*(\sigma;z) + \frac{P^*_{\cc+1}(\sigma;z)-z P^*_{\cc}(\sigma;z)}{P_{\cc+1}(\sigma;z)-z P_{\cc}(\sigma;z)}\mathbf{P}(\sigma;z).
$$
We then deduce that
$$
(e_0,(z \mathbbm{I}- A^{\cc}(\sigma))^{-1}e_0)=   \frac{P^*_{\cc+1}(\sigma;z)-z P^*_{\cc}(\sigma;z)}{P_{\cc+1}(\sigma;z)-z P_{\cc}(\sigma;z)}.
$$
and hence,
$$
\mathfrak{m}^{[\cc]}_k (\sigma)= \frac{1}{\| \mathbf{P}_k(\sigma)\|^2} =  \frac{P^*_{\cc+1}(\sigma;\xi_{\cc,k}(\sigma))-\xi_{\cc,k}(\sigma) P^*_{\cc}(\sigma;\xi_{\cc,k}(\sigma))}{P'_{\cc+1}(\sigma;\xi_{\cc,k}(\sigma))- \xi_{\cc,k}(\sigma)P'_{\cc}(\sigma;\xi_{\cc,k}(\sigma)) -P'_{\cc}(\sigma;\xi_{\cc,k}(\sigma)) }.
$$

Let us introduce the discrete measure $d\psi^{[\cc]} (\sigma;x)$, which has an atom at point $\xi_{\cc,k}(\sigma)$ with mass $\mathfrak{m}^{[\cc]}_k(\sigma)$ for $k=0, \ldots, \cc$. We have
$$
 \int_{-\infty}^\infty \frac{1}{z-x}d\psi^{[\cc]} (\sigma;x) =  \frac{P^*_{\cc+1}(\sigma;z)-z P^*_{\cc}(\sigma;z)}{P_{\cc+1}(\sigma;z)-z P_{\cc}(\sigma;z)}.
$$
By computing $(e_m,(z \mathbbm{I}- A^{\cc}(\sigma))^{-1}e_0)$, we obtain
\begin{equation}
    \label{eqtechcc1}
    P_m(\sigma;z)  \frac{P^*_{\cc+1}(\sigma;z)-z P^*_{\cc}(\sigma;z)}{P_{\cc+1}(\sigma;z)-z P_{\cc}(\sigma;z)} - P^*_m(\sigma;z)= \int_{-\infty}^\infty \frac{P_m(\sigma;x)}{z-x}d\psi^{[\cc]} (\sigma;x).
\end{equation}

To conclude this section, let us consider the matrix $B^{[\cc]}$ defined by
$$
\mathcal{B}^{[\mathfrak{c}]}= \begin{pmatrix}
  -\rho  & \rho & 0 & 0 & \ldots \\
 1 & -(1+\rho) & \rho & 0 & \ldots \\
 0 &2  & -(2+\rho) & \rho &  \ldots \\
 \vdots &  & \cc & -(\rho+\cc) & \rho \\
  &  & & \cc & -\cc
\end{pmatrix}.
$$
The matrix $\mathcal{B}^{[\cc]}$ is the infinitesimal generator of the Markov process $(\mathfrak{n}^{[\cc]}(t))$ describing the number of customers in the $M/M/\mathfrak{c}/\mathfrak{c}$ system. This matrix induces a selfadjoint operator in the Hilbert space $H$
defined by
$$
H=\left\{f \in \mathbb{R}^\mathbb{N}: \sum_{n=0}^\infty f_n^2 \frac{\rho^n}{n!}<\infty   \right\}
$$
(see \cite{KMcG} for details) and   can be diagonalized by using the same technique as above. The eigenvalues satisfy the equation
$$
C_{\cc+1}(-x;\rho) = C_\cc(-x,\rho),
$$
which has $\cc+1$ non-positive solutions, denoted by $-\sigma_{\cc,k}$ for $k=0, \ldots, \cc$. Note that 0 is an eigenvalue associated with eigenvector $e^{[\cc]}$ with all components equal to 1. Introducing the measure $d\phi^{[\cc]}(\rho;x)$ with atoms at point $\sigma_{\cc,k}$ with mass
$$
m^{[\cc]}_k= \frac{C^*_{\cc+1}(\sigma_k;\rho) - C^*_{\cc}(\sigma_k;\rho)}{C'_{\cc+1}(\sigma_k;\rho)- C'_{\cc}(\sigma_k;\rho)}
$$
for $k=0,\ldots,\cc$, we have
$$
\int_{0}^\infty \frac{1}{z+x} d\phi^{[\cc]}(\rho;x) =  -\frac{C^*_{\cc+1}(-z;\rho) - C^*_{\cc}(-z;\rho)}{C_{\cc+1}(-z;\rho)- C_{\cc}(-z;\rho)}
$$
Finally, we have the relation for $m=0,\ldots, \cc$
\begin{equation}
    \label{eqtechcc2}
    C^*_m(-z;\rho)-\frac{C^*_{\cc+1}(-z;\rho) - C^*_{\cc}(-z;\rho)}{C_{\cc+1}(-z;\rho)- C_{\cc}(-z;\rho)} C_m(-z;\rho) = \int_{0}^\infty \frac{C_m(x;\rho)}{z+x} d\phi^{[\cc]}(\rho;x).
\end{equation}

\subsection{Transient characteristics}
As in Section~\ref{transient}, we consider the number  $\nu^{[\cc]}(\sigma)$ of customers in the system when the observer leaves the system. 

\begin{proposition}
The probability mass function of the random variable $\nu^{[\cc]}(\sigma)$ is given by
 \begin{equation}
   \P(\nu^{[\cc]} (\sigma)=m ) = \sigma  \frac{\rho^m}{m!} \int_{0}^\infty \frac{C_m(x;\rho)}{\sigma+x} d\phi^{[\cc]}(\rho;x).
\end{equation}   
\end{proposition}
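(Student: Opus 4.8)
The plan is to reproduce, almost verbatim, the argument used for the infinite-capacity case (Equations~\eqref{massnu}--\eqref{massnubis}), replacing $A(\sigma)$, $d\psi(\sigma;x)$ and $d\mathcal{P}_\rho$ by their truncated counterparts $A^{[\cc]}(\sigma)$, $d\psi^{[\cc]}(\sigma;x)$ and $d\phi^{[\cc]}(\rho;x)$, and using the two connection formulas \eqref{eqtechcc1} and \eqref{eqtechcc2} in the roles played earlier by \eqref{eqconnexion} and \eqref{henrici_charlier}. Exactly as in \eqref{eqnu}, summing the joint law of $(\kappa^{[\cc]}(\sigma),\nu^{[\cc]}(\sigma))$ over the number of transitions gives $\P(\nu^{[\cc]}(\sigma)=m)=\mathcal{A}^{[\cc]}_{m,-1}(\sigma)\,{}^te_0(\I-A^{[\cc]}(\sigma))^{-1}e_m$ for $0\le m\le\cc$. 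The one point to keep track of is that the absorption prefactor $\mathcal{A}^{[\cc]}_{m,-1}(\sigma)$ equals $\frac{\sigma}{m+\sigma+\rho}$ for $m<\cc$ but equals $\frac{\sigma}{\cc+\sigma}$ at the top state $m=\cc$.

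First I would compute the resolvent entry ${}^te_0(\I-A^{[\cc]}(\sigma))^{-1}e_m=[(z\I-A^{[\cc]}(\sigma))^{-1}]_{0,m}\big|_{z=1}$. The available formula \eqref{eqtechcc1} delivers only the \emph{transposed} entry $[(z\I-A^{[\cc]}(\sigma))^{-1}]_{m,0}=\int\frac{P_m(\sigma;x)}{z-x}\,d\psi^{[\cc]}(\sigma;x)$, and because $A^{[\cc]}(\sigma)$ is not self-adjoint one cannot simply interchange the two indices as was done implicitly for the self-adjoint operator $A(\sigma)$. Instead I would invoke the purely algebraic ratio identity for inverses of tridiagonal matrices, $[(z\I-A^{[\cc]})^{-1}]_{0,m}=R_m\,[(z\I-A^{[\cc]})^{-1}]_{m,0}$ with $R_m=\prod_{k=0}^{m-1}A^{[\cc]}_{k,k+1}(\sigma)/A^{[\cc]}_{k+1,k}(\sigma)$, an identity that needs no reversibility and follows from the cofactor expansion.

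The step I expect to be the main obstacle is the boundary index $m=\cc$. For $m<\cc$ the product $R_m$ telescopes to $R_m=\pi_m(\sigma)$ (see \eqref{defpin}), whence $\mathcal{A}^{[\cc]}_{m,-1}(\sigma)R_m=\frac{\sigma}{m+\sigma+\rho}\pi_m(\sigma)=\frac{\sigma}{\sigma+\rho}\frac{\rho^m}{m!}$, exactly as in the infinite case. At $m=\cc$, however, the last factor uses the modified sub-diagonal coefficient $A^{[\cc]}_{\cc,\cc-1}(\sigma)=\frac{\cc}{\cc+\sigma}$ instead of $\frac{\cc}{\cc+\sigma+\rho}$, so the product is $R_\cc=\pi_\cc(\sigma)\frac{\cc+\sigma}{\cc+\sigma+\rho}$ rather than $\pi_\cc(\sigma)$. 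The delicate check is that this discrepancy is cancelled precisely by the modified absorption rate: $\mathcal{A}^{[\cc]}_{\cc,-1}(\sigma)R_\cc=\frac{\sigma}{\cc+\sigma}\,\pi_\cc(\sigma)\frac{\cc+\sigma}{\cc+\sigma+\rho}=\frac{\sigma}{\sigma+\rho}\frac{\rho^\cc}{\cc!}$, the same constant. Consequently the representation $\P(\nu^{[\cc]}(\sigma)=m)=\frac{\sigma}{\sigma+\rho}\frac{\rho^m}{m!}\int\frac{P_m(\sigma;x)}{1-x}\,d\psi^{[\cc]}(\sigma;x)$ holds uniformly for $0\le m\le\cc$, i.e. the exact analogue of \eqref{massnu}.

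Finally I would evaluate at $z=1$ and pass to Charlier polynomials via $P_n(\sigma;1)=C_n(-\sigma;\rho)$ and $P^*_n(\sigma;1)=-(\sigma+\rho)C^*_n(-\sigma;\rho)$, already recorded in the proof of \eqref{massnubis}. Substituting these into \eqref{eqtechcc1} at $z=1$ turns its left-hand side into $(\sigma+\rho)\left[C^*_m(-\sigma;\rho)-\frac{C^*_{\cc+1}(-\sigma;\rho)-C^*_\cc(-\sigma;\rho)}{C_{\cc+1}(-\sigma;\rho)-C_\cc(-\sigma;\rho)}\,C_m(-\sigma;\rho)\right]$, which is precisely $(\sigma+\rho)$ times the right-hand side of \eqref{eqtechcc2} taken at $z=\sigma$. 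This yields the measure-level identity $\int\frac{P_m(\sigma;x)}{1-x}\,d\psi^{[\cc]}(\sigma;x)=(\sigma+\rho)\int_0^\infty\frac{C_m(x;\rho)}{\sigma+x}\,d\phi^{[\cc]}(\rho;x)$; inserting it into the uniform representation of the previous paragraph and cancelling the factor $\sigma+\rho$ gives the claimed formula. As an independent cross-check one may note that $\P(\nu^{[\cc]}(\sigma)=m)=\int_0^\infty\P(N^{[\cc]}(t)=m)\,\sigma e^{-\sigma t}\,dt$ and that Laplace-transforming the self-adjoint spectral expansion of the generator $\mathcal{B}^{[\cc]}$ produces the weights $\frac{\sigma}{\sigma+\sigma_{\cc,k}}$, reproducing the same integral against $d\phi^{[\cc]}(\rho;x)$.
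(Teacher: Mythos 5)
Your proposal is correct and follows the same overall route as the paper: reduce $\P(\nu^{[\cc]}(\sigma)=m)$ to a resolvent entry of $A^{[\cc]}(\sigma)$ at $z=1$, evaluate it through \eqref{eqtechcc1}, pass to Charlier polynomials via $P_m(\sigma;1)=C_m(-\sigma;\rho)$ and $P^*_m(\sigma;1)=-(\sigma+\rho)C^*_m(-\sigma;\rho)$, and conclude with \eqref{eqtechcc2}. The one place you genuinely diverge is in producing ${}^te_0(\I-A^{[\cc]}(\sigma))^{-1}e_m$: the paper expands $e_m$ in the eigenbasis $\{\mathbf{P}_k(\sigma)\}$ with weight $\pi_m(\sigma)\mathfrak{m}^{[\cc]}_kP_m(\sigma;\xi_{\cc,k}(\sigma))$ and writes the absorption prefactor $\sigma/(m+\sigma+\rho)$ uniformly in $m$, whereas you transpose the $(m,0)$ entry supplied by \eqref{eqtechcc1} using the cofactor ratio $R_m=\prod_{k=0}^{m-1}A^{[\cc]}_{k,k+1}(\sigma)/A^{[\cc]}_{k+1,k}(\sigma)$. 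For $m<\cc$ the two coincide since $R_m=\pi_m(\sigma)$. At the boundary $m=\cc$ your version is the more careful one: because the reversibility relation \eqref{reversecond} fails at the last index, the eigenvectors are orthogonal for the weights $R_n$ rather than $\pi_n$, the absorption probability from state $\cc$ is $\sigma/(\cc+\sigma)$ rather than $\sigma/(\cc+\sigma+\rho)$, and $R_\cc=\pi_\cc(\sigma)\tfrac{\cc+\sigma}{\cc+\sigma+\rho}$; you verify explicitly that these two deviations cancel so that the constant $\tfrac{\sigma}{\sigma+\rho}\tfrac{\rho^m}{m!}$ holds for all $0\le m\le\cc$. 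The paper's two conventions at $m=\cc$ are each slightly off but compensate, so its final formula is correct; your argument makes that cancellation explicit, at the cost of invoking the tridiagonal cofactor identity. Your closing cross-check via the Laplace transform of $N^{[\cc]}(t)$ matches the remark the paper makes immediately after the proposition.
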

\begin{proof}
As in  Section~\ref{transient}, we have
\begin{eqnarray*}
\P(\nu^{[\cc]} (\sigma)=m ) &=& \frac{\sigma}{m+\sigma+\rho}{}^te_0(\I-A^{[\cc]}(\sigma))^{-1}e_m \\
&=& \frac{\sigma}{\sigma + \rho} \frac{\rho^m}{m!} \int_{-\infty}^\infty \frac{P_m(\sigma;x)}{1-x}d\psi^{[\cc]} (\sigma;x),
\end{eqnarray*}
where we have used the fact that
$$
e_m = \pi_m \sum_{k=0}^\cc \mathfrak{m}^{[\cc]}_k P_m(\sigma;\xi_{\cc,k}(\sigma))\mathbf{P}_k(\sigma)
$$
and then 
$$
(\I-A^{[\cc]}(\sigma))^{-1}e_m= \pi_m \sum_{k=0}^\cc \mathfrak{m}^{[\cc]}_k \frac{P_m(\sigma;\xi_{\cc,k}(\sigma))}{1-\xi_{\cc,k}(\sigma)}\mathbf{P}_k(\sigma).
$$
It follows that by using Equation~\eqref{eqtechcc1}
\begin{eqnarray*}
   \P(\nu^{[\cc]} (\sigma)=m ) &=&  \frac{\sigma}{\sigma + \rho} \frac{\rho^m}{m!}\left(P_m(\sigma;1)  \frac{P^*_{\cc+1}(\sigma;1)-z P^*_{\cc}(\sigma;1)}{P_{\cc+1}(\sigma;1)- P_{\cc}(\sigma;1))} - P^*_m(\sigma;1) \right) \\
   &=& \sigma  \frac{\rho^m}{m!} \left( - \frac{C^*_{\cc+1}(-\sigma;\rho) -  C^*_{\cc}(-\sigma;\rho)    }{C_{\cc+1}(-\sigma;\rho) - C_{\cc}(-\sigma;\rho) } C_m(-\sigma;\rho) + C^*_m(-\sigma;\rho) \right), \\
   &=& \sigma  \frac{\rho^m}{m!} \int_{0}^\infty \frac{C_m(x;\rho)}{\sigma+x} d\phi^{[\cc]}(\rho;x),
\end{eqnarray*}
where we have used the connection between the polynomials $P_n(\sigma;x)$ and Charlier polynomials, and Equation~\eqref{eqtechcc2} in the last step.
\end{proof}

By Laplace inversion, we have by letting $N^{[\cc]}(t)$ denote the number of customers in the  $M/M/\cc/\cc$ queue at time $t$
$$
  \P(N^{[\cc]} (t)=m )=  \frac{\rho^m}{m!} \int_0^\infty {C_m(x;\rho)} e^{-xt}d\phi^{[\cc]} (\rho;x),
$$
which is the Karlin-McGregor result for the  birth and death $(N^{[\cc]} (t))$ issued from state 0 (see \cite{KMcGBDP} for details).

For the variable $\kappa^{[\cc]}(\sigma)$ equal to the number of customers entering the system or leaving the system, we have the following result.

\begin{proposition}
    The generating function of the random variable $\kappa^{[\cc]}(\sigma)$ is given by
    \begin{equation}
        \label{genkappacc}
        \E\left( z^{\kappa^{[\cc]}(\sigma)}  \right) = \sigma z \int_{0}^\infty \frac{\mathcal{C}^{[\cc]} (x;\rho z^2;\rho z)}{\sigma+\rho-\rho z^2 x}d\phi^{[\cc]}(\rho z^2;x) ,
    \end{equation}
 where
 \begin{equation}
     \label{defCcc}
     \mathcal{C}^{[\cc]} (x;a;z) = \sum_{m=0}^\cc C_m(x;a) \frac{z^m}{m!}.
 \end{equation}
 \end{proposition}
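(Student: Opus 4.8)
The plan is to reproduce, step for step, the derivation of the generating function of $\kappa(\sigma)$ given in Section~\ref{transient}, replacing the spectral measure $d\psi(\sigma;\cdot)$ by its finite-capacity analogue $d\psi^{[\cc]}(\sigma;\cdot)$ and, at the end, the Poisson measure by $d\phi^{[\cc]}(\rho z^2;\cdot)$. First I would decompose over the terminal state $m=\nu^{[\cc]}(\sigma)$ and expand the powers of $A^{[\cc]}(\sigma)$ in the finite orthogonal basis $\{\mathbf{P}_k(\sigma)\}$ of Section~\ref{mmcc}, obtaining, exactly as for the unbounded queue,
$$
\P(\kappa^{[\cc]}(\sigma)=k)=\frac{\sigma}{\sigma+\rho}\sum_{m=0}^{\cc}\frac{\rho^m}{m!}\int_{-\infty}^{\infty}x^{k-1}P_m(\sigma;x)\,d\psi^{[\cc]}(\sigma;x).
$$
The one boundary subtlety to record here is that at the full state $m=\cc$ the absorption weight equals $\sigma/(\sigma+\cc)$ and the reversibility weight $\pi_\cc(\sigma)=\frac{\cc+\sigma}{\sigma+\rho}\frac{\rho^\cc}{\cc!}$ differs from the generic $\pi_m(\sigma)$; however their product again collapses to the uniform factor $\frac{\sigma}{\sigma+\rho}\frac{\rho^\cc}{\cc!}$, so the displayed identity is valid for every $0\le m\le\cc$.

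Next I would form $\E(z^{\kappa^{[\cc]}(\sigma)})=\sum_{k\ge1}z^k\,\P(\kappa^{[\cc]}(\sigma)=k)$ and sum the geometric series $\sum_{k\ge1}z^kx^{k-1}=1/(\tfrac1z-x)$, which gives
$$
\E\!\left(z^{\kappa^{[\cc]}(\sigma)}\right)=\frac{\sigma}{\sigma+\rho}\sum_{m=0}^{\cc}\frac{\rho^m}{m!}\int_{-\infty}^{\infty}\frac{P_m(\sigma;x)}{\tfrac1z-x}\,d\psi^{[\cc]}(\sigma;x).
$$
I would then apply the connection identity \eqref{eqtechcc1} at the point $\tfrac1z$ and convert each $P_n(\sigma;\tfrac1z)$ and $P^*_n(\sigma;\tfrac1z)$ into Charlier polynomials via \eqref{PCrel} and its second-kind companion; writing $Z=\sigma+\rho-\rho z^2$ this produces $P_n(\sigma;\tfrac1z)=z^nC_n(-Z;\rho z^2)$ and $P^*_n(\sigma;\tfrac1z)=-(\sigma+\rho)z^{n+1}C^*_n(-Z;\rho z^2)$, just as in the unbounded case.

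It then remains to recognise the terminating continued fraction $\bigl(P^*_{\cc+1}(\sigma;\tfrac1z)-\tfrac1z P^*_\cc(\sigma;\tfrac1z)\bigr)/\bigl(P_{\cc+1}(\sigma;\tfrac1z)-\tfrac1z P_\cc(\sigma;\tfrac1z)\bigr)$ as the Stieltjes transform $\int_0^\infty(Z+x)^{-1}\,d\phi^{[\cc]}(\rho z^2;x)$ and to invoke \eqref{eqtechcc2} with parameter $\rho z^2$ at the point $Z$. Granting this, each summand becomes $\sigma z\,\frac{(\rho z)^m}{m!}\int_0^\infty\frac{C_m(x;\rho z^2)}{Z+x}\,d\phi^{[\cc]}(\rho z^2;x)$, with kernel $1/(Z+x)=1/(\sigma+\rho+x-\rho z^2)$; interchanging the finite sum with the integral and using $\sum_{m=0}^{\cc}C_m(x;\rho z^2)\frac{(\rho z)^m}{m!}=\mathcal{C}^{[\cc]}(x;\rho z^2;\rho z)$ from \eqref{defCcc} then yields \eqref{genkappacc}.

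I expect the main obstacle to be exactly the identification in the last paragraph. The substitution $z\mapsto\tfrac1z$ simultaneously rescales the Charlier parameter $\rho\to\rho z^2$ and, through the eigenvalue relation $P_{\cc+1}(\sigma;x)=xP_\cc(\sigma;x)$, carries the boundary condition into a $z$-dependent relation of the shape $z^2C_{\cc+1}(-x;\rho z^2)=C_\cc(-x;\rho z^2)$ rather than the plain $C_{\cc+1}=C_\cc$ that defines the atoms of $d\phi^{[\cc]}(\rho z^2;\cdot)$. Unlike the interior terms, which transform cleanly, these extra powers of $z$ on the $C_{\cc+1}$-terms of the terminating fraction do not follow directly from \eqref{eqtechcc2}; one must check by hand, using the explicit residue masses $\mathfrak{m}^{[\cc]}_k(\sigma)$ and $m^{[\cc]}_k$ of the two measures, that the resulting Stieltjes transform is indeed that of $d\phi^{[\cc]}(\rho z^2;\cdot)$ with the stated kernel, and is not merely that of a differently normalised boundary measure.
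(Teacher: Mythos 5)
Your derivation tracks the paper's own proof step for step: the same decomposition over the terminal state $m$, the same geometric summation producing the kernel $1/(\tfrac1z-x)$, the same application of \eqref{eqtechcc1} at the point $\tfrac1z$, the same Charlier conversion with $Z=\sigma+\rho-\rho z^2$, and the same final appeal to \eqref{eqtechcc2} and to the definition \eqref{defCcc}. Your side remark about the boundary state $m=\cc$ — that the absorption weight $\sigma/(\sigma+\cc)$ multiplied by $\pi_\cc(\sigma)=\frac{\cc+\sigma}{\sigma+\rho}\frac{\rho^\cc}{\cc!}$ collapses to the uniform factor $\frac{\sigma}{\sigma+\rho}\frac{\rho^\cc}{\cc!}$ — is correct and makes explicit something the paper leaves implicit when it writes $\frac{\sigma}{m+\sigma+\rho}$ for all $m\le\cc$.

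The obstacle you flag in your last paragraph is genuine, and you should be aware that the paper's own proof does not resolve it: it silently applies \eqref{eqtechcc1} at $\tfrac1z$ with the boundary combinations $P^*_{\cc+1}(\sigma;\tfrac1z)-z\,P^*_{\cc}(\sigma;\tfrac1z)$ and $P_{\cc+1}(\sigma;\tfrac1z)-z\,P_{\cc}(\sigma;\tfrac1z)$, whereas \eqref{eqtechcc1} evaluated at the spectral point $w=\tfrac1z$ requires the coefficient $\tfrac1z$, not $z$. Carrying out the conversion correctly, one finds
$$
P^*_{\cc+1}\bigl(\sigma;\tfrac1z\bigr)-\tfrac1z\,P^*_{\cc}\bigl(\sigma;\tfrac1z\bigr)=-(\sigma+\rho)z^{\cc}\bigl(z^2C^*_{\cc+1}(-Z;\rho z^2)-C^*_{\cc}(-Z;\rho z^2)\bigr),
$$
and similarly for the denominator, so the terminating fraction becomes $\frac{z^2C^*_{\cc+1}-C^*_{\cc}}{z^2C_{\cc+1}-C_{\cc}}$ evaluated at $(-Z;\rho z^2)$ rather than the ratio $\frac{C^*_{\cc+1}-C^*_{\cc}}{C_{\cc+1}-C_{\cc}}$ that \eqref{eqtechcc2} converts into $\int_0^\infty\frac{C_m(x;\rho z^2)}{Z+x}\,d\phi^{[\cc]}(\rho z^2;x)$. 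These two ratios do not coincide for general $z$, so the last step needs either a supplementary identity or a replacement of $d\phi^{[\cc]}(\rho z^2;\cdot)$ by the discrete measure supported on the roots of $z^2C_{\cc+1}(-x;\rho z^2)=C_{\cc}(-x;\rho z^2)$ with the corresponding residue masses. Your proposal is therefore as complete as the paper's argument, and your instinct to verify the identification by hand via the masses $\mathfrak{m}^{[\cc]}_k(\sigma)$ and $m^{[\cc]}_k$ is exactly the check required to close the proof.
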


\begin{proof}
    We have
    \begin{eqnarray*}
   \P(\kappa^{[\cc]}(\sigma)=k) &=& \sum_{m=0}^\cc \frac{\sigma}{m+\sigma+\rho} {}^te_0 \left(A^{[\cc]}(\sigma)\right)^{k-1} e_m\\
 &=& \frac{\sigma}{\sigma+\rho}\sum_{m=0}^\cc \frac{\rho^m}{m!} \int_{-\infty}^\infty x^{k-1}P_m(\sigma;x)d\psi^{[\cc]}(\sigma;x) 
\end{eqnarray*}
and then
$$
   \E\left( z^{\kappa^{[\cc]}(\sigma)}  \right) = \frac{\sigma  }{\sigma+\rho}\sum_{m=0}^\cc \frac{\rho^m}{m!} \int_{-\infty}^\infty \frac{z P_m(\sigma;x)}{1-z x}d\psi^{[\cc]}(\sigma;x) .
$$
By using Equation~\eqref{eqtechcc1}, we have
\begin{align*}
 &   \int_{-\infty}^\infty \frac{z P_m(\sigma;x)}{1-z x}d\psi^{[\cc]}(\sigma;x) =   P_m\left(\sigma;\frac{1}{z}\right)  \frac{P^*_{\cc+1}(\sigma;\frac{1}{z})-z P^*_{\cc}(\sigma;\frac{1}{z})}{P_{\cc+1}(\sigma;\frac{1}{z})-z P_{\cc}(\sigma;\frac{1}{z})} - P^*_m\left(\sigma;\frac{1}{z}\right) \\
 & =(\sigma+\rho) z^{m+1}  \left( - \frac{C^*_{\cc+1}(-Z;\rho z^2) -  C^*_{\cc}(-Z;\rho z^2)    }{C_{\cc+1}(-Z ;\rho z^2) - C_{\cc}(-Z;\rho z^2) } C_m(-Z;\rho z^2) + C^*_m(-Z ;\rho z^2) \right)
\end{align*}
where $Z=\sigma+\rho - \rho z^2$ and we have used the relation between the polynomials $P_m(\sigma,z)$ and the Charlier polynomials. Now, Equation~\eqref{eqtechcc2} yields
$$
 \int_{-\infty}^\infty \frac{z P_m(\sigma;x)}{1-z x}d\psi^{[\cc]}(\sigma;x) = (\sigma+\rho)z^{m+1}  \int_{0}^\infty \frac{C_m(x;\rho z^2)}{\sigma+\rho-\rho z^2 x}d\phi^{[\cc]}(\rho z^2;x) 
$$
and Equation~\eqref{genkappacc} follows.
\end{proof}

By using Equation~\eqref{genkappacc}, the generating function of the random variable $K^{[\cc]}(t)$ counting the number of customers entering  or leaving the queue is given by
$$
     \E\left( z^{K^{[\cc]}(t)}  \right) = z \int_{0}^\infty {\mathcal{C}^{[\cc]} (x;\rho z^2,\rho z)}e^{-\rho t(1-z^2 x)}d\phi^{[\cc]}(\rho z^2;x)
$$

Finally, let $\nu^{[\cc]}(\sigma)$ be the number  of the departures from the queue before the observer leaves the system. The generating function of this random variable is given by the following result.

\begin{proposition}
    The generating function of the random variable $\delta^{[\cc]}(\sigma)$ is given by
    \begin{equation}
        \label{gendeltacc}
        \E\left( z^{\delta^{[\cc]}(\sigma)}  \right) = \frac{\sigma}{\sigma+\rho}  \int_{-\infty}^\infty \frac{\mathcal{C}^{[\cc]} \left(\frac{\rho-(\sigma+\rho)x^2}{x^2}; \frac{\rho}{x^2};\rho\right)}{1 -z x^2}d\psi^{[\cc]}(\sigma;x) ,
    \end{equation}
 where $ \mathcal{C}^{[\cc]} (x;a;z)$ is defined by Equation~\eqref{defCcc}
\end{proposition}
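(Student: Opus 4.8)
The plan is to follow the same route as the proof of Proposition~\ref{gend} for the infinite-capacity system, replacing the self-adjoint spectral identity by the finite diagonalization of $A^{[\cc]}(\sigma)$ carried out in the previous subsection. The starting point is again the conservation relation~\eqref{conservationbis}, which holds verbatim here since every transition before absorption is either an arrival or a departure while the net number of arrivals equals the occupancy $\nu^{[\cc]}(\sigma)$. Hence $\kappa^{[\cc]}(\sigma)=2\delta^{[\cc]}(\sigma)+\nu^{[\cc]}(\sigma)+1$, so that
$$
\P(\delta^{[\cc]}(\sigma)=k)=\sum_{m=0}^\cc \P(\kappa^{[\cc]}(\sigma)=2k+m+1,\nu^{[\cc]}(\sigma)=m),
$$
and I would express each joint probability through the sub-stochastic matrix exactly as in the $\kappa^{[\cc]}(\sigma)$ computation, namely as $\frac{\sigma}{m+\sigma+\rho}\,{}^te_0(A^{[\cc]}(\sigma))^{2k+m}e_m$, the boundary term $m=\cc$ being treated in the same way.

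Next I would invoke the finite spectral expansion $e_m=\pi_m\sum_{k=0}^\cc \mathfrak{m}^{[\cc]}_k P_m(\sigma;\xi_{\cc,k}(\sigma))\mathbf{P}_k(\sigma)$ established above, together with the fact that $\mathbf{P}_k(\sigma)$ is an eigenvector of $A^{[\cc]}(\sigma)$ for the eigenvalue $\xi_{\cc,k}(\sigma)$ and that $P_0(\sigma;x)=1$. This turns the matrix power into a moment of the measure $d\psi^{[\cc]}(\sigma;x)$, namely ${}^te_0(A^{[\cc]}(\sigma))^{j}e_m=\pi_m\int_{-\infty}^\infty x^{j}P_m(\sigma;x)\,d\psi^{[\cc]}(\sigma;x)$; since $\frac{\pi_m}{m+\sigma+\rho}=\frac{1}{\sigma+\rho}\frac{\rho^m}{m!}$ by~\eqref{defpin}, I obtain
$$
\P(\delta^{[\cc]}(\sigma)=k)=\frac{\sigma}{\sigma+\rho}\int_{-\infty}^\infty x^{2k}\sum_{m=0}^\cc \frac{\rho^m}{m!}x^m P_m(\sigma;x)\,d\psi^{[\cc]}(\sigma;x).
$$
The inner finite sum is then identified with the truncated generating function: using~\eqref{PCrel} one has $x^m P_m(\sigma;x)=C_m\!\left(\frac{\rho-(\sigma+\rho)x^2}{x^2};\frac{\rho}{x^2}\right)$, whence $\sum_{m=0}^\cc \frac{\rho^m}{m!}x^m P_m(\sigma;x)=\mathcal{C}^{[\cc]}\!\left(\frac{\rho-(\sigma+\rho)x^2}{x^2};\frac{\rho}{x^2};\rho\right)$ by the definition~\eqref{defCcc}, with third argument $\rho$ exactly as in the infinite case. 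Multiplying by $z^k$ and summing the geometric series $\sum_{k\ge0}(zx^2)^k=(1-zx^2)^{-1}$ inside the integral then yields~\eqref{gendeltacc}.

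The main obstacle, and the only place where the argument genuinely departs from the infinite-capacity proof, is that one can no longer use the self-adjoint spectral identity of \cite{reed}, because $A^{[\cc]}(\sigma)$ is not self-adjoint. I would therefore rely entirely on the resolvent computation of the preceding subsection, which produced the discrete measure $d\psi^{[\cc]}(\sigma;x)$ supported on the atoms $\xi_{\cc,k}(\sigma)$ and the associated expansion of $e_m$; in particular the finiteness of the state space is precisely what forces the infinite series $\mathcal{C}$ to collapse to the finite sum $\mathcal{C}^{[\cc]}$. A secondary point to verify is the interchange of the summation over $k$ with the integration against $d\psi^{[\cc]}$, which is immediate because the measure is carried by the finitely many atoms $\xi_{\cc,k}(\sigma)$, all lying in $(-1,1)$, so that $|zx^2|<1$ for $|z|<1$ and the geometric series converges there.
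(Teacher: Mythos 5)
Your proposal is correct and follows essentially the same route as the paper: expressing $\P(\delta^{[\cc]}(\sigma)=k)$ via the conservation relation as $\sum_{m=0}^{\cc}\frac{\sigma}{m+\sigma+\rho}\,{}^te_0(A^{[\cc]}(\sigma))^{2k+m}e_m$, converting the matrix powers to integrals against $d\psi^{[\cc]}(\sigma;x)$ through the finite eigenvector expansion, identifying the truncated sum with $\mathcal{C}^{[\cc]}$ via the relation $x^mP_m(\sigma;x)=C_m\bigl(\tfrac{\rho-(\sigma+\rho)x^2}{x^2};\tfrac{\rho}{x^2}\bigr)$, and summing the geometric series. The paper's proof is just a two-line sketch of exactly these steps; you have merely supplied the details it leaves implicit.
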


\begin{proof}
 From Section~\ref{transient}, we have
\begin{eqnarray*}
 \P(\mathfrak{d}^{[\cc]}(\sigma)=k) &=&  \sum_{m=0}^\cc \frac{\sigma}{m+\sigma+\rho} {}^te_0 \left(A^{[\cc]}(\sigma)\right)^{2k+m} e_m\\
&=&\frac{\sigma}{\sigma+\rho}\sum_{m=0}^\cc \frac{\rho^m}{m!} \int_{-\infty}^\infty x^{2k+m}P_m(\sigma;x)d\psi^{[\cc]}(\sigma;x).
\end{eqnarray*}   
By using the relationship between polynomials $P_n(\sigma;x)$ and Charlier polynomials, Equation~\eqref{gendeltacc} follows.
\end{proof}

\section{Conclusion}
\label{conclusion}

We have analyzed in this paper some transient characteristics of  an initially empty  $M/M/\infty$ system over a  finite time interval $(0,t)$ via spectral theory, notably the number of departures from the queue. The probability mass function of these random variables can be obtained by using probabilistic arguments. Nevertheless, the use of spectral theory is more systematic in the sense that the same framework can be applied to other models, which may be not amenable via probabilistic analysis. We have illustrated this point by considering the finite capacity $M/M/\cc/\cc$ system. Other models such as the $M/M\cc/\infty$ analyzed in \cite{KMcG} can be analyzed via spectral theory.



}

\bibliographystyle{plain}
\bibliography{adhoc_revised}

\begin{thebibliography}{10}

\bibitem{Abramowitz}
M.~Abramowitz and I.~Stegun.
\newblock {\em Handbook of Mathematical Functions}.
\newblock Dover Publications, 1965.

\bibitem{ismail}
R.~Askey and M.~Ismail.
\newblock {\em Recurrence relations, continued fractions and orthogonal
  polynomials}, volume 49 No 300.
\newblock Memoirs of the American Mathematical Society, 1984.

\bibitem{erdelyi}
Harry Bateman and Arthur Erdélyi.
\newblock {\em {Higher transcendental functions}}.
\newblock California Institute of technology. Bateman Manuscript project.
  McGraw-Hill, New York, NY, 1955.

\bibitem{polya}
C.E. Billigheimer, G.~Polya, and G.~Szeg{\"o}.
\newblock {\em Problems and Theorems in Analysis II: Theory of Functions.
  Zeros. Polynomials. Determinants. Number Theory. Geometry}.
\newblock Classics in Mathematics. Springer Berlin Heidelberg, 1997.

\bibitem{chihara}
T.S. Chihara.
\newblock {\em An Introduction to Orthogonal Polynomials}.
\newblock Dover Books on Mathematics. Dover Publications, 2011.

\bibitem{flajoletlp}
P.~Flajolet.
\newblock Combinatorial aspects of continued fractions.
\newblock {\em Discrete Mathematics}, 32(2):125--161, 1980.

\bibitem{flajoletbdp}
Philippe Flajolet and Fabrice Guillemin.
\newblock The formal theory of birth-and-death processes, lattice path
  combinatorics and continued fractions.
\newblock {\em Advances in Applied Probability}, 32(3):750–778, 2000.

\bibitem{grossharris}
Donald Gross and Carl~M. Harris.
\newblock {\em Fundamentals of Queueing Theory (2nd Ed.).}
\newblock John Wiley \& Sons, Inc., New York, NY, USA, 1985.

\bibitem{FGBJ}
F.~Guillemin and J.~Boyer.
\newblock Analysis of the {M/M/1} queue with processor sharing via spectral
  analysis.
\newblock {\em Queueing Systems}, 39:377 -- 397, 2001.

\bibitem{guillemin_pinchon_1998}
Fabrice Guillemin and Didier Pinchon.
\newblock {Continued Fraction Analysis of the Duration of an Excursion in an $
  M/M/\infty$ System}.
\newblock {\em Journal of Applied Probability}, 35(1):165–183, 1998.

\bibitem{guillemin_pinchon_1999}
Fabrice Guillemin and Didier Pinchon.
\newblock {On a random variable associated with excursions in an $M/M/\infty$
  System}.
\newblock {\em Queueing Systems}, 32(4), 1999.

\bibitem{guillemin_quintuna20}
Fabrice Guillemin and Veronica~Quintuna Rodriguez.
\newblock On the sojourn of an arbitrary customer in an {$M/M/1$} processor
  sharing queue.
\newblock {\em Stochastic Models}, 36(3):378--400, 2020.

\bibitem{FGAS}
Fabrice Guillemin and Alain Simonian.
\newblock {Transient characteristics of an $M/M/\infty$ system}.
\newblock {\em Advances in Applied Probability}, 27(3):862–888, 1995.

\bibitem{Henrici}
P.~Henrici.
\newblock {\em Applied and computational complex analysis}, volume~II.
\newblock John Wiley and Sons, 1991.

\bibitem{KMcGBDP}
S.~Karlin and J.~Mc Gregor.
\newblock {The differential equation of birth and death processes, and the
  Stieltjes moment problem}.
\newblock {\em Advances in Applied Probability}, 85:489–546, 1957.

\bibitem{KMcG}
Samuel Karlin and James McGregor.
\newblock Many server queueing processes with poisson input and exponential
  service times.
\newblock {\em Pacific J. Math.}, 8(1):87--118, 1958.

\bibitem{Klein0}
L.~Kleinrock.
\newblock {\em Queueing Systems}, volume~1.
\newblock Wiley, New York, 1976.

\bibitem{MorrisonMMinf}
John~A. Morrison, Larry~A. Shepp, and Christopher~J. Van~Wyk.
\newblock A queueing analysis of hashing with lazy deletion.
\newblock {\em SIAM Journal on Computing}, 16(6):1155--1164, 1987.

\bibitem{reed}
M.~Reed and B.~Simon.
\newblock {\em Methods of modern physics: Fourier analysis, selfadjointness},
  volume~II.
\newblock Elsevier (Singapore), 2003.

\bibitem{Ross97}
Sheldon~M. Ross.
\newblock {\em Introduction to Probability Models}.
\newblock Academic Press, San Diego, CA, USA, sixth edition, 1997.

\end{thebibliography}

\end{document}